\numberwithin{equation}{section}
\newtheorem{theorem}{Theorem}[section]
\newtheorem{corollary}[theorem]{Corollary}
\theoremstyle{definition}
\newtheorem{remark}[theorem]{Remark}
\newlength{\breite}
\newlength{\hbreite}
\DeclareMathOperator{\en}{W}
\DeclareMathOperator{\wen}{W}
\DeclareMathOperator{\grad}{\nabla_{L^2} \en}
\DeclareMathOperator{\wgrad}{\nabla_{L^2} \wen}
\DeclareMathOperator{\diam}{diam}
\DeclareMathOperator{\vol}{vol}
\DeclareMathOperator{\ar}{\mu}
\begin{document}

\title[Finite time singularities]{A note on singularities in finite time for the constrained Willmore flow}
\author{
Simon Blatt}
\thanks{The author was supported through FWF grant "The gradient flow of curvature energies"}
\address[Simon Blatt]{Paris Lodron Universit\"at Salzburg, Hellbrunner Strasse 34, 5020 Salzburg, Austria}
\email{simon.blatt@sbg.ac.at}
\date{\today}

\begin{abstract}
This work investigates the formation of singularities under the steepest descent $L^2$-gradient flow of the functional $\en_{\lambda_1, \lambda_2}$, the sum of the Willmore energy, $\lambda_1$ times the area, and $\lambda_2$ times the signed volume of an immersed closed surface without boundary in $\mathbb R^3$.
We show that in the case that $\lambda_1>1$ and $\lambda_2=0$ any immersion develops singularities in finite time under this flow. If $\lambda_1 >0$ and $\lambda_2 > 0$,  embedded closed surfaces with energy less than 
$$8\pi+\min\{(16 \pi \lambda_1^3)/(3\lambda_2^2), 8\pi\}$$  and positive volume evolve singularities in finite time. If in this case the initial surface is a topological sphere and the initial energy is less than $8 \pi$, the flow shrinks to a round point in finite time. We furthermore discuss similar results for the case that $\lambda_2$ is negative. 
 
These results strengthen the ones of McCoy and Wheeler in \cite{McCoy2016}. For $\lambda_1 >0$ and $\lambda_2 \geq 0$ they showed that embedded closed spheres with positive volume and energy close to $4\pi$, i.e.  close to the Willmore energy of a round sphere, converge to round points in finite time.
%
\end{abstract}

\maketitle

\tableofcontents

\section{Introduction} \label{sec:Int}

In \cite{Helfrich1973}, Helfrich suggested the functional 
$$
 \en^{H_0}_{\lambda_1, \lambda_2} (f) := \int_{\Sigma} |H_f+H_0|^2 d\mu_f + \lambda_1 \ar (f) + \lambda_2 \vol(f)
$$
for immersions $f:\Sigma \rightarrow \mathbb R ^3$ of two-dimensional compact connected surfaces $\Sigma$ without boundary to study lipid bilayers. Here, $H_f= \frac 1 2 (\kappa_1 + \kappa_2)$ denotes the mean curvature and $\ar_f$ the surface measure on $\Sigma$ induced by $f$. The constant $H_0 \in \mathbb R$ is called spontaneous curvature and $\vol(f)$ denotes the signed inclosed volume given by
$$
 \vol(f) = \int_{[0,1] \times \Sigma} \phi_f^*(dvol),
$$
where $ \phi^*(dvol)$ denotes the pull-back of the standard volume form $dvol = dx^1 \wedge dx^2 \wedge dx^3$ on $\mathbb R^3$ under $\phi_f: [0,1] \times \Sigma \rightarrow \mathbb R^3, \phi_f(t,x) := tf(x)$.
 
Helfrich found that these energies are well suited to explain the characteristic shape of red blood cells: the shape of a biconcave disk. In honor of his work, the functional $\en^{H^0}_{\lambda_1, \lambda_2}$ is now called \emph{Helfrich functional}.

In this article, we will restrict our attention to the special case of zero spontaneous curvature $H_0$. We only consider
\begin{equation} \label{eq:Helfrich}
 \en_{\lambda_1, \lambda_2}(f):=\en^{0}_{\lambda_1, \lambda_2} = \int_{\Sigma} |H_f|^2 d\ar_f + \lambda_1 \ar (f) + \lambda_2 \vol(f),
\end{equation}
in which case the first summand is the \emph{Willmore energy}
\begin{equation} \label{eq:Willmore}
 \wen(f) := \int_{\Sigma} |H_f|^2 d\mu_f 
\end{equation}
We only deal with this case since the Willmore functional is scale invariant - indeed already Blaschke \cite{Blaschke1929} observed that it is indeed invariant under M\"obius transformations that leave the surface bounded. Willmore proved that the Willmore energy is always greater or equal to $4 \pi$ with equality only for a parametrized round sphere.
 
Helfrich calculated the $L^2$-gradient of $\en_{\lambda_1, \lambda_2}$. It is known to be equal to 
\begin{equation}\label{eq:HelfrichGradient}
 \grad_{\lambda_1, \lambda_2}^{H_0} (f) = (\Delta_f H_f + 2 (H_f+H_0)(H_f^2-H_0H_f -  K_f) - \lambda_1 H_f - \lambda_2) \nu_f,
\end{equation}
where $\nu_f$ denotes the unit normal along $f$, $K_f$ the Gau\ss{} curvature, and $\Delta_f$ the Lapace-Beltrami operator. In the case of zero spontaneous curvature this reads as
\begin{equation}\label{eq:ConstrainedWillmoreGradient}
 \grad_{\lambda_1, \lambda_2}(f) = \grad_{\lambda_1, \lambda_2}^{0} (f) = (\Delta_f H_f + 2 H_f (H_f^2-  K_f) - \lambda_1 H_f + \lambda_2) \nu_f.
\end{equation}

We will consider smooth families of smooth immersions $f_t: \Sigma \rightarrow \mathbb R^3$, $t \in [0,T)$ of a compact surface $\Sigma$ without boundary that are solutions to the steepest $L^2$-gradient flow of the Helfrich functional with zero spontaneous curvature $H_0 =0$, i.e. that solve
\begin{equation} \label{eq:ConstrainedWillmoreFlow}
\partial_t f_t = - \grad_{\lambda_1, \lambda_2} (f_t) \quad \forall t \in [0,T).
\end{equation}
Note that such a family of immersions satisfies the equality
\begin{equation} \label{eq:EnEq}
 \frac d {dt} \en_{\lambda_1, \lambda_2} (f_t) = - \|\grad_{\lambda_1, \lambda_2}(f_t)\|^2_{L^2(d\mu_f)}.
\end{equation}

 Let us state the following short time existence theorem which is an immediate consequence of Theorem 1.1 in \cite{Mantegazza2011} the proof of which was based on the Lions-Lax-Milgram theorem and an analysis of the linearized problem by Polden \cite{Huisken1999}.

\begin{theorem}[cf. \cite{Mantegazza2011}]
 Suppose $f_0: \Sigma \rightarrow \mathbb R ^3$ is a compact smoothly immersed surface without boundary. There exists a unique maximal smooth family of immersions $f:\Sigma \times [0,T) \rightarrow \mathbb  R^3$ solving \eqref{eq:ConstrainedWillmoreFlow} with $f(\cdot, 0) = f_0.$
\end{theorem}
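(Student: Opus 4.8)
The plan is to recognize \eqref{eq:ConstrainedWillmoreFlow} as a quasilinear parabolic system of fourth order and to invoke the general short-time existence theory for such equations on compact manifolds, developed by Polden \cite{Huisken1999} and packaged into an abstract statement in \cite{Mantegazza2011}. The first issue is that the right-hand side of \eqref{eq:ConstrainedWillmoreFlow} is purely normal, so the flow is invariant under reparametrization of $\Sigma$ and the system naively associated to the map $f$ is only degenerate parabolic. I would remove this gauge degeneracy by seeking the solution among normal graphs over the initial surface: write $f(x,t) = f_0(x) + u(x,t)\,\nu_{f_0}(x)$ with $u(\cdot,0) = 0$, and reformulate \eqref{eq:ConstrainedWillmoreFlow} as the requirement that the normal speed of $f(\cdot,t)$ equal $-\langle \grad_{\lambda_1,\lambda_2}(f), \nu_f\rangle$. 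Since $\langle \nu_{f_0}, \nu_f\rangle$ stays close to $1$ for small $u$, this produces a scalar equation $\partial_t u = F(x, u, \nabla u, \nabla^2 u, \nabla^3 u, \nabla^4 u)$ on the fixed closed manifold $(\Sigma, g_0)$. (Alternatively one can add a De Turck-type tangential term to render the system for $f$ itself strictly parabolic and undo it afterwards.)

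Second, I would verify that this reduced equation is strictly parabolic of fourth order. The leading term of $\grad_{\lambda_1,\lambda_2}(f)$ is $\Delta_f H_f\,\nu_f$; recalling the Gauss formula $\Delta_f f = 2 H_f \nu_f$ and the fact that the linearization of $H_f$ under a normal variation $u\,\nu$ has leading part $\tfrac12\Delta_{g_0} u$, one finds that the linearization of $F$ at $u \equiv 0$ has the form $\partial_t v = -\tfrac12 \Delta_{g_0}^2 v + (\text{lower order})$, with principal symbol $-\tfrac12|\xi|_{g_0}^4$. This is precisely the sign needed for parabolicity, and the bilinear form $a(v,w) = \tfrac12\int_\Sigma \langle \Delta_{g_0} v, \Delta_{g_0} w\rangle\, d\mu_{g_0} + (\text{lower order})$ on $H^2(\Sigma)$ satisfies a G\aa rding inequality.

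Third, with parabolicity established I would run the standard scheme: solve the linear problem with coefficients frozen at a given $\bar u$ using the Lions--Lax--Milgram theorem (or a Galerkin approximation) to obtain weak solutions in the natural energy space, upgrade to smooth solutions via elliptic regularity and parabolic bootstrapping, and record the a priori estimates in parabolic H\"older (or Sobolev) spaces. A contraction mapping argument on a small ball of such a space, valid on a short time interval $[0,T)$, then yields a unique fixed point $u$, which is the desired solution; its smoothness follows by further bootstrapping. After the reduction of the previous paragraphs, this is exactly the content of Theorem 1.1 of \cite{Mantegazza2011} and can simply be quoted.

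Finally, I would transfer the graphical solution back to a genuine solution of \eqref{eq:ConstrainedWillmoreFlow} by composing with the flow of the time-dependent tangential vector field correcting the parametrization (respectively, by undoing the De Turck diffeomorphisms), and I would deduce uniqueness by putting any two solutions with the same initial datum into the normal-graph gauge over $f_0$ — legitimate for short time since both remain $C^1$-close to $f_0$ — and applying uniqueness for the scalar equation. Existence on a maximal interval $[0,T)$, and consistency of the locally defined solutions, then follow from uniqueness by a routine continuation argument. The main obstacle is the linear theory of fourth-order parabolic operators on $\Sigma$: proving the coercivity/G\aa rding estimate for the linearized operator together with the matching regularity estimates in the right function spaces — the step carried out by Polden and treated as a black box in \cite{Mantegazza2011}.
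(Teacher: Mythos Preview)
Your outline is correct and matches the approach the paper has in mind; note, however, that the paper itself does not give a proof of this statement but simply invokes Theorem~1.1 of \cite{Mantegazza2011} (whose proof rests on Polden's linear theory via Lions--Lax--Milgram), so what you have written is a faithful expansion of precisely that black box rather than an alternative argument.
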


For the case of the gradient flow of $\int_{\Sigma} |H_f-H_0|^2 d\mu_f$ constrained to immersions of fixed length and volume, short time existence was proven by Kohsaka and Nagasawa. Note that short time existence of the flow above can for example also be derived using analytic semi-groups as carried out for the Willmore flow and the surface diffusion flow in \cite{Escher1998,Mayer2003}. Yannan Liu \cite{Liu2012} could bound the lifespan of the flow from below if there is only a small quantum of energy within balls of a given scale thus extending the corresponding result of Kuwert and Sch\"atzle for the Willmore flow \cite{Kuwert2002}.

The following two theorems summarize the results of this article. We set for $\lambda_2 \not= 0$
\begin{equation}\begin{aligned}
\varepsilon_1(\lambda_1, \lambda_2)  &= \frac {\lambda_1^2} {\lambda_2^2}  16 \pi \\
\varepsilon_2 (\lambda_1, \lambda_2)  &= \frac { \lambda_1^3} {3 \lambda_2^2} 16 \pi.
\end{aligned}\end{equation}
Then the following theorem on existence of finite time singularities holds.

\begin{theorem} \label{thm:FormationOfSingularities}
Let us assume that $\lambda_1 >0$ and let $\Sigma$ be a smooth compact manifold without boundary.
\begin{enumerate}[label={(\roman*)}]
\item If $\lambda_2 =0$, the locally constrained Willmore flow \eqref{eq:ConstrainedWillmoreFlow} starting with any initial immersed surface $f_0:\Sigma \rightarrow \mathbb R^3$ forms a singularity in finite time.
\item If $\lambda_2 \not= 0$, the flow starting with an embedding $f_0: \Sigma \rightarrow \mathbb R^3$ with $\vol(f)>0$ forms a singularity in finite time if either 
  \begin{quote}
  $\ar(f_0) < \varepsilon_1(\lambda_1, \lambda_2) $ and $
  \en_{\lambda_1, \lambda_2} (f_0) < 4 \pi + \varepsilon_2 (\lambda_1, \lambda_2) $
  \end{quote}
  or
  \begin{quote}
   $\lambda_2 > 0$ and   $ 
  \en_{\lambda_1, \lambda_2} (f_0) < 8 \pi + \min\{\varepsilon_2 (\lambda_1, \lambda_2) ,8\pi\}.
  $
  \end{quote} 
\end{enumerate}
\end{theorem}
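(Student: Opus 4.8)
The strategy is to derive a differential inequality that forces some geometric quantity to reach a forbidden value in finite time, which contradicts long-time smoothness. The natural candidate is a combination of the area $\ar(f_t)$ and the signed volume $\vol(f_t)$, since under the flow \eqref{eq:ConstrainedWillmoreFlow} the variation formulas give $\frac{d}{dt}\ar(f_t) = -\int_\Sigma H_f\,\langle \grad_{\lambda_1,\lambda_2}(f_t),\nu_f\rangle\,d\ar_f$ and $\frac{d}{dt}\vol(f_t) = -\int_\Sigma \langle \grad_{\lambda_1,\lambda_2}(f_t),\nu_f\rangle\,d\ar_f$. The key observation is that the energy identity \eqref{eq:EnEq} keeps $\en_{\lambda_1,\lambda_2}(f_t)$ bounded, hence (using $\wen\ge 0$) keeps $\lambda_1\ar(f_t)+\lambda_2\vol(f_t)$ bounded above by $\en_{\lambda_1,\lambda_2}(f_0)$; combined with the isoperimetric inequality and, for embeddings, the Li–Yau/Willmore bound relating $\wen$ to the number of sheets, this constrains the trajectory to a bounded region of the $(\ar,\vol)$-plane.

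For part (i), $\lambda_2=0$: I would test \eqref{eq:EnEq} against the explicit gradient \eqref{eq:ConstrainedWillmoreGradient} and extract from the area evolution that, as long as the flow exists, $\ar(f_t)$ is nonincreasing up to controlled error, while $\en_{\lambda_1,\lambda_2}(f_t)=\wen(f_t)+\lambda_1\ar(f_t)$ is nonincreasing. Since $\lambda_1>1$, I would look for a test-function argument — plug a cleverly chosen normal variation, or use that the $L^2$-gradient has a nonzero component forcing the area to strictly decrease at a definite rate — which would drive $\ar(f_t)\to 0$ in finite time while curvature concentrates, i.e. a singularity. Alternatively, one compares with the pure Willmore-type lower bound $\wen\ge 4\pi$ and uses scale-invariance of $\wen$: if the flow existed for all time and area stayed bounded below, a blow-down/monotonicity argument would yield a contradiction with $\lambda_1>1$. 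The precise mechanism — why $\lambda_1>1$ rather than $\lambda_1>0$ — is the subtle point, and I expect it to come from the sharp constant in the inequality $\wen(f)\ge 4\pi$ paired with how the area term competes against it.

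For part (ii), I would combine three ingredients: (a) the energy bound $\lambda_1\ar(f_t)+\lambda_2\vol(f_t)\le \en_{\lambda_1,\lambda_2}(f_0)-\wen(f_t)$; (b) for embeddings, the Willmore inequality $\wen(f_t)\ge 4\pi$ (sphere case, energy $<8\pi$) or the Li–Yau bound $\wen(f_t)\ge 8\pi$ for surfaces that are not embedded spheres, which is why the two energy thresholds $4\pi+\varepsilon_2$ and $8\pi+\min\{\varepsilon_2,8\pi\}$ appear; and (c) the isoperimetric inequality $\ar(f_t)^3 \ge 36\pi\,\vol(f_t)^2$. One shows that the hypotheses on $\ar(f_0)$ and $\en_{\lambda_1,\lambda_2}(f_0)$ are engineered precisely so that the set of $(\ar,\vol)$ with $\vol>0$ satisfying all three constraints is bounded away from the region accessible to a globally-defined flow: writing $g(t)=\lambda_1\ar(f_t)+\lambda_2\vol(f_t)$ and using the isoperimetric inequality to bound $\vol$ by $\ar^{3/2}$, the constants $\varepsilon_1=16\pi\lambda_1^2/\lambda_2^2$ and $\varepsilon_2=16\pi\lambda_1^3/(3\lambda_2^2)$ are exactly the threshold values at which $\lambda_1 x+\lambda_2(x/(36\pi)^{1/3})^{3/2}$ — the minimal value of $\lambda_1\ar+\lambda_2\vol$ over the isoperimetric boundary with $\ar=x$ — crosses the energy budget. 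So if the flow existed for all time with the stated initial data, $\vol(f_t)$ would have to stay positive and bounded, forcing $\ar(f_t)$ into a compact interval, and then the strict dissipation in \eqref{eq:EnEq} together with a lower bound on $\|\grad_{\lambda_1,\lambda_2}(f_t)\|_{L^2}^2$ away from constrained Willmore surfaces (or directly the area-shrinking estimate) would exhaust the energy in finite time — the contradiction. The main obstacle is step (c)'s interaction with regularity: ruling out that the flow simply converges smoothly requires showing that no constrained-Willmore equilibrium with $\vol>0$ satisfies the energy constraint, or equivalently closing the differential inequality so that $\ar(f_t)\to 0$ strictly, which is where the precise algebra of $\varepsilon_1,\varepsilon_2$ must be shown to be sharp.
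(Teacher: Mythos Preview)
Your outline has the right architecture --- bound $\|\grad_{\lambda_1,\lambda_2}(f_t)\|_{L^2}^2$ from below and feed this into the energy identity \eqref{eq:EnEq} --- but the mechanism you propose for producing that lower bound is not the one that actually works, and the alternatives you sketch do not close.

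The paper does \emph{not} track area or volume evolution directly, nor does it argue by absence of equilibria. The decisive idea, which you are missing, is a \emph{scaling identity}: for $f_\alpha=\alpha(f-p)$ with $p\in f(\Sigma)$, the Willmore term is scale-invariant while area and volume scale as $\alpha^2$ and $\alpha^3$, so
\[
\int_\Sigma \langle \grad_{\lambda_1,\lambda_2}(f),\,f-p\rangle\,d\mu_f
=\left.\frac{d}{d\alpha}\en_{\lambda_1,\lambda_2}(f_\alpha)\right|_{\alpha=1}
=2\lambda_1\ar(f)+3\lambda_2\vol(f).
\]
Pairing this with Cauchy--Schwarz and Topping's diameter estimate $\diam(f)\le\frac{2}{\pi}\sqrt{\ar(f)\wen(f)}$ yields
\[
\|\grad_{\lambda_1,\lambda_2}(f)\|_{L^2}^2\ \ge\ \frac{\pi^2\lambda_1^2}{\wen(f)}
\]
whenever $\lambda_2\vol(f)\ge 0$, and hence $\frac{d}{dt}(\en_{\lambda_1,\lambda_2}(f_t))^2\le -2\pi^2\lambda_1^2$. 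This already settles part (i) for all $\lambda_1>0$; there is no special role for $\lambda_1>1$ (that threshold in the abstract is a misprint, and your attempt to locate a mechanism for it is chasing a ghost). The same estimate, combined with the isoperimetric inequality to handle $\lambda_2\vol<0$, gives the $\varepsilon_1,\varepsilon_2$ thresholds in part (ii). For the second alternative in (ii) one further needs that a sphere eversion must pass through a quadruple point (Max--Banchoff), so that $\wen\ge 16\pi$ somewhere along any homotopy from $\vol>0$ to an embedding with $\vol<0$; this, not Li--Yau alone, is what makes the $8\pi$ buffer work.

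Your proposed routes fail at the quantitative step. Knowing that no constrained-Willmore equilibrium lies in the admissible region does \emph{not} give a uniform positive lower bound on $\|\grad_{\lambda_1,\lambda_2}\|_{L^2}^2$ along the flow; the gradient could decay to zero without the flow converging. Likewise, the area evolution $\partial_t\ar=-2\int H\cdot(\text{normal speed})\,d\mu$ has no sign and no obvious rate, and a blow-down argument would require monotonicity or compactness input you have not supplied. The scaling-plus-diameter trick bypasses all of this by producing an explicit lower bound on the gradient in terms of quantities already controlled by the energy.
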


This substantially improves the result of McCoy and Wheeler in \cite{McCoy2016} who only considered the case $\lambda_2 \geq 0$, $\vol (f_0) \geq 0$ and had to assume that $\en_{\lambda_1, \lambda_2}  (f_0)< 4 \pi + \varepsilon$ for a universal constant $\varepsilon>0$, in principle calculable but not further specified. Furthermore our method of proof is based on the different scaling of the three summands that make up the energy and does not rely on sophisticated energy estimates as the proof of the corresponding result by McCoy and Wheeler.

For the case of the constrained Willmore flow of spheres, i.e in the case that $\Sigma = \mathbb S^2$, we can also show that the flow sub-converges to a round point in the sense that there are times $t_j \rightarrow T$ and a point $x\in \mathbb R^n$ such that the rescaled immersions
$$
 \left(\frac {4\pi} {\ar(f_{t_j})} \right)^{\frac 1 2} (f_{t_j} - x)
$$
converge to the unit sphere. More precisely we have

\begin{theorem} \label{thm:RoundPoints}
If in the situation of Theorem \ref{thm:FormationOfSingularities}  furthermore $\Sigma = \mathbb S ^2$ and $$\lim_{t\uparrow T} \en_{\lambda_1, \lambda_2}(f_t) < 8 \pi,$$ then the flow sub-converges to a round point in finite time $T<\infty$.
\end{theorem}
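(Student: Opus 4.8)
The plan is to identify the finite-time singularity furnished by Theorem~\ref{thm:FormationOfSingularities} as a contraction of the whole surface to a point, and then to recognise the rescaled limit as a round sphere by a concentration--compactness argument resting on the sharp threshold $8\pi$, in the spirit of Kuwert and Sch\"atzle~\cite{Kuwert2002}. \textbf{Step 1 (the singularity is a round-point contraction).} By Theorem~\ref{thm:FormationOfSingularities} the maximal existence time $T$ is finite, and maximality forces $\sup_\Sigma|A_{f_t}|^2\to\infty$ as $t\uparrow T$, where $A_{f_t}$ is the second fundamental form (a uniform bound on $A_{f_t}$ would, by interpolation, bound all its derivatives and extend the flow smoothly across $T$). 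The key claim is that $\ar(f_t)\to0$ as $t\uparrow T$, i.e.\ the singularity is global rather than a local pinch; I expect this to be the main point. Granting it, since $\en_{\lambda_1,\lambda_2}(f_t)$ is non-increasing and, for the embedded surfaces $f_t$, the signed volume satisfies $|\vol(f_t)|\le\ar(f_t)^{3/2}/(6\sqrt\pi)\to0$, the Willmore energy is controlled near $T$ by $\wen(f_t)=\en_{\lambda_1,\lambda_2}(f_t)-\lambda_1\ar(f_t)-\lambda_2\vol(f_t)\le\lim_{s\uparrow T}\en_{\lambda_1,\lambda_2}(f_s)+o(1)<8\pi$, so the $f_t$ indeed stay embedded by the Li--Yau inequality.

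\textbf{Step 2 (rescaling to an almost-Willmore immersion).} From \eqref{eq:EnEq}, $\int_0^T\|\grad_{\lambda_1,\lambda_2}(f_t)\|^2_{L^2(d\mu_{f_t})}\,dt=\en_{\lambda_1,\lambda_2}(f_0)-\lim_{t\uparrow T}\en_{\lambda_1,\lambda_2}(f_t)<\infty$, hence there is a sequence $t_j\uparrow T$ with $\|\grad_{\lambda_1,\lambda_2}(f_{t_j})\|_{L^2(d\mu_{f_{t_j}})}\to0$. I would then set $r_j:=(4\pi/\ar(f_{t_j}))^{1/2}\to\infty$, take $x_j$ to be the centre of mass of $f_{t_j}$, and put $g_j:=r_j(f_{t_j}-x_j)$, so $\ar(g_j)=4\pi$ and $\wen(g_j)=\wen(f_{t_j})<8\pi$. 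Writing $\grad$ for the unconstrained Willmore $L^2$-gradient, so that $\grad_{\lambda_1,\lambda_2}(f)=\grad(f)+(\lambda_2-\lambda_1H_f)\nu_f$ by \eqref{eq:ConstrainedWillmoreGradient}, and using that $\grad$ scales like the inverse cube of the dilation factor, one gets
$$\|\grad(g_j)\|^2_{L^2(d\mu_{g_j})}=r_j^{-4}\,\|\grad(f_{t_j})\|^2_{L^2(d\mu_{f_{t_j}})}\le r_j^{-4}\big(2\,\|\grad_{\lambda_1,\lambda_2}(f_{t_j})\|^2_{L^2}+4\lambda_1^2\wen(f_{t_j})+4\lambda_2^2\ar(f_{t_j})\big)\longrightarrow0.$$
Thus the $g_j$ have area $4\pi$, Willmore energy below $8\pi$, and $L^2$-Willmore gradient tending to $0$.

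\textbf{Step 3 (the limit is the unit sphere).} Because $\wen(g_j)<8\pi$, no curvature can concentrate or escape: a concentration point would, after blow-up and inversion, produce a bubble of Willmore energy at least $4\pi$, while the remaining topological sphere still carries at least $4\pi$, exceeding the budget $8\pi$. Hence $\sup_\Sigma|A_{g_j}|$ is bounded and, by Simon's diameter estimate and the centring, the images $g_j(\Sigma)$ lie in a fixed ball. The interior estimates for the flow (\cite{Kuwert2002}; in the constrained setting \cite{Liu2012}) then give smooth subconvergence of the $g_j$ to a closed immersion $g_\infty\colon\mathbb S^2\to\mathbb R^3$ with $\ar(g_\infty)=4\pi$, $\wen(g_\infty)<8\pi$ and $\grad(g_\infty)=0$, i.e.\ a Willmore sphere. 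By Bryant's classification a Willmore sphere has Willmore energy $4\pi$ --- attained only by the round sphere --- or at least $16\pi$; therefore $g_\infty$ is the round sphere of area $4\pi$, that is, the unit sphere. With $x:=\lim_jx_j$ (which exists, and $|x-x_j|=o(\ar(f_{t_j})^{1/2})$ because the velocity of the centre of mass is bounded by $\|\grad_{\lambda_1,\lambda_2}(f_t)\|_{L^2}\,\ar(f_t)^{1/2}$ and $T<\infty$) one concludes $(4\pi/\ar(f_{t_j}))^{1/2}(f_{t_j}-x)\to\mathbb S^2$, the claimed sub-convergence to a round point.

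\textbf{On the main obstacle.} The hard part is the claim in Step~1 that $\ar(f_t)\to0$, equivalently that no local pinch forms. This should follow by combining the strict monotonicity of $\en_{\lambda_1,\lambda_2}$ along the flow with the fact that, for immersions that remain close (after rescaling) to a round sphere, the area decreases at a rate bounded away from zero --- as it does identically on a round sphere, where $\tfrac{d}{dt}\ar=-8\pi\lambda_1$ --- so that the contraction cannot stall; this is a continuity/bootstrap argument of the kind in McCoy and Wheeler~\cite{McCoy2016}, and it is exactly where the different scaling behaviour of the three energy terms is used. The secondary ingredient, the exclusion of bubbling in Step~3, rests on the sharp $8\pi$-threshold (Li--Yau inequality and removability of point singularities, cf.\ \cite{Kuwert2002}); this is why the hypothesis $\lim_{t\uparrow T}\en_{\lambda_1,\lambda_2}(f_t)<8\pi$ is needed.
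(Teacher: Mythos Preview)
Your proposal has a genuine gap at exactly the point you flag as ``the main obstacle'': you never prove that $\ar(f_t)\to 0$. Your heuristic at the end is circular --- it assumes the rescaled surfaces stay close to round spheres in order to conclude that the area shrinks at a definite rate, but closeness to a round sphere is precisely what you are trying to establish. Without $\ar(f_t)\to 0$, your area-normalised rescaling $r_j=(4\pi/\ar(f_{t_j}))^{1/2}$ need not blow up, and Steps~2--3 lose their force: the lower-order terms $\lambda_1 H\nu$ and $\lambda_2\nu$ do not scale away, and you cannot conclude that the limit is Willmore.

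The paper avoids this difficulty by reversing the logic. Instead of first proving the singularity is a global contraction and then rescaling by area, it invokes the parabolic blowup construction of McCoy--Wheeler (Theorem~\ref{thm:Blowup}), which rescales at the \emph{curvature concentration} scale $r_j\downarrow 0$ and produces a locally smooth limit $\tilde f:\tilde\Sigma\to\mathbb R^3$ that is automatically a (possibly non-compact) Willmore surface, since the constraint terms scale away with $r_j$ (Theorem~\ref{thm:BlowupWillmore}). The sharp $8\pi$ hypothesis is then used only to rule out the non-compact case: if $\tilde\Sigma$ were non-compact, inverting on a sphere missing $\tilde f(\tilde\Sigma)$ and applying the point-removability theorem of Kuwert--Sch\"atzle~\cite{Kuwert2004} would yield a closed Willmore sphere of energy below $8\pi$, hence round by Bryant~\cite{Bryant1984}, forcing $\tilde f$ to be a plane --- contradicting the curvature concentration built into the blowup. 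Thus $\tilde\Sigma$ is compact, the convergence of the rescaled flows is global, and $\ar(f_{t_j})\sim r_j^2\to 0$ is a \emph{consequence} rather than an assumption. Your bubbling sketch in Step~3 is morally the same dichotomy, but carried out at the wrong scale and without the clean compact/non-compact split that makes the argument go through.
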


This result builds on the contruction of a blowup profile in \cite{McCoy2016}.
We show at the end of Section \ref{sec:Blo} that the constant $8\pi$ Theorem~\ref{thm:RoundPoints} is sharp.

Let us finish this introduction with an outline of the structure of the remaining article. In Section \ref{sec:Rev} we shortly recapitulate the findings of McCoy and Wheeler on both the existence of critical points of the Helfrich functional as well as on the formation of singularities in finite time. Section \ref{sec:Exi} shows how to use the scaling properties of the different summands that make up the energy $\en_{\lambda_1, \lambda_2}$ to show that the flow \eqref{eq:ConstrainedWillmoreFlow} develops singularities in finite time.  In Section \ref{sec:Blo} we recapitulate the construction of a blowup profile at a point singularity from \cite{Kuwert2001} and \cite{McCoy2016}. We again use the different scalings of the components to show that this blowup profile must be a Willmore surfaces without assuming any initial bound on the energy of the initial surface (cf. Theorem~\ref{thm:BlowupWillmore}). Combining this with the point removability results of Kuwert and Sch\"atzle \cite{Kuwert2004} and the classification result for Willmore spheres in \cite{Bryant1984} we can show the convergence to round points as stated in Theorem~\ref{thm:RoundPoints}.

\section{Review of the results of McCoy and Wheeler} \label{sec:Rev}

In the pioneering paper \cite{McCoy2013}, McCoy and Wheeler completely classified all critical immersions of the functional $\en_{\lambda_1, \lambda_2}^{H_0}$ of complete surface without boundary under the assumption that 
\begin{equation}\label{eq:NearSpheres}
\wen(f) \leq 4 \pi + \varepsilon_0
\end{equation}
where the constant $\varepsilon_0 >0$ can in principle be deduced from the constructive nature of their proof.  Note that since the Willmore energy of a round sphere is equal to $4 \pi$, \eqref{eq:NearSpheres} in a certain way says that the distance of $f_0$ from parametrizing a round sphere is small.
They showed
 
\begin{theorem}[\protect{\cite[Theorem 1]{McCoy2013}}] \label{thm:McCoy2013}
There is an absolute constant $\varepsilon_0>0$ such that the following holds: Suppose that $f:\Sigma \rightarrow \mathbb R^3$ is a smooth properly immersed complete surface without boundary and
\begin{equation}
 \int_{\Sigma} \| A^0\|^2 d\mu < \varepsilon_0.
\end{equation}
Then the following is true:
If $\lambda_1 >0$ 
\begin{enumerate}
 \item[($\lambda_2 < 0$)] $\grad_{\lambda_1, \lambda_2} (f) =0$ if and only if $f(\Sigma)$  is a sphere of radius $-\frac{2\lambda_1}{\lambda_2}$,
 \item[($\lambda_2 =0$)] $\grad_{\lambda_1, \lambda_2} (f) =0$ if and only if $f(\Sigma)$  is a  plane,
 \item[($\lambda_2 >0$)] $\grad_{\lambda_1, \lambda_2} (f) \not=0$.
\end{enumerate}
\end{theorem}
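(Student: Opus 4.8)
\textbf{The easy direction} is a direct computation. If $f(\Sigma)$ is a plane, then $A_f\equiv 0$ and $H_f\equiv 0$, so the scalar in the bracket of \eqref{eq:ConstrainedWillmoreGradient} equals $\lambda_2$, which vanishes iff $\lambda_2=0$. If $f(\Sigma)$ is a round sphere of radius $r$, then $H_f$ is a non-zero constant with $K_f=H_f^2$ and $\Delta_f H_f=0$, so \eqref{eq:ConstrainedWillmoreGradient} collapses to a single affine relation between $H_f$ and $\lambda_1,\lambda_2$; since on such a sphere $|H_f|$ is proportional to $1/r$ and the sign of $H_f$ is fixed by the orientation implicit in the definitions of $\vol$ and of the normal $\nu_f$, this relation is solvable with $r>0$ precisely when $\lambda_2<0$, in which case the solution is the sphere of radius $-2\lambda_1/\lambda_2$; for $\lambda_2>0$ it has no solution, and for $\lambda_2=0$ it forces $H_f\equiv 0$, hence --- once umbilicity is known --- a plane.

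\textbf{The hard direction} I would reduce to the following rigidity lemma: \emph{there is a universal $\varepsilon_0>0$ such that every connected complete properly immersed surface $f$ with $\grad_{\lambda_1,\lambda_2}(f)=0$ and $\int_\Sigma\|A^0_f\|^2\,d\mu_f<\varepsilon_0$ is totally umbilic}, i.e.\ $A^0_f\equiv 0$. Granting the lemma, the theorem follows at once: a connected complete totally umbilic surface in $\mathbb R^3$ is, by the Codazzi equations (which force the common principal curvature to be constant), a plane or a round sphere, and the computation of the easy direction then says which of the two --- if either --- is critical for the given sign of $\lambda_2$; for $\lambda_2>0$ neither is, whence $\grad_{\lambda_1,\lambda_2}(f)\neq 0$.

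\textbf{Proof of the lemma.} From the Gauss equation $\|A^0_f\|^2=2(H_f^2-K_f)$ one gets $\int_\Sigma\|A^0_f\|^2\,d\mu_f=2\wen(f)-2\int_\Sigma K_f\,d\mu_f$. When $\Sigma$ is compact, Gauss--Bonnet together with the Willmore bound $\wen(f)\geq 4\pi$ and the Li--Yau inequality forces, for $\varepsilon_0$ small, that $\Sigma$ be an embedded topological sphere with $\wen(f)$ close to $4\pi$; the estimate of De Lellis and Müller then gives that, after a suitable choice of centre $a\in\mathbb R^3$ and radius $\rho$ with $4\pi\rho^2=\mu_f(\Sigma)$, the immersion $f$ is $W^{2,2}$-close to $a+\rho\,\mathbb S^2$ with closeness controlled by $\int_\Sigma\|A^0_f\|^2\,d\mu_f$. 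Reading $\grad_{\lambda_1,\lambda_2}(f)=0$ as a fourth-order quasilinear elliptic equation for $f$ and applying elliptic estimates at the scale $\rho$ upgrades this to $C^\infty$-closeness, so that $\rho\sup_\Sigma\|A^0_f\|$ and the oscillation of $\rho H_f$ become uniformly as small as we wish. The remaining gap step is of Kuwert--Schätzle type (cf.\ \cite{Kuwert2004}): substitute the Simons-type identity $\Delta_f A^0_f=\nabla^2_0 H_f+A_f\ast A_f\ast A^0_f$ into \eqref{eq:ConstrainedWillmoreGradient} to trade $\Delta_f H_f$ for lower-order terms, pair with $A^0_f$ and integrate, use the Codazzi identity $\operatorname{div} A^0_f=\nabla H_f$, the Michael--Simon Sobolev inequality and a Gagliardo--Nirenberg interpolation, and absorb every term carrying a factor $\int_\Sigma\|A^0_f\|^2\,d\mu_f$, $\sup_\Sigma\|A^0_f\|^2$, or $\lambda_j$ times a now-controlled power of $H_f$; smallness of $\varepsilon_0$ then forces $\int_\Sigma\|A^0_f\|^4\,d\mu_f=0$, i.e.\ $A^0_f\equiv 0$. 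For non-compact $\Sigma$ one runs the same computation against cutoff functions subordinate to an exhaustion of $\Sigma$: properness, the finiteness of $\int_\Sigma\|A^0_f\|^2\,d\mu_f$, and the fourth-order structure keep the boundary and cutoff error terms under control, and the absorption again yields $A^0_f\equiv 0$, so that $f(\Sigma)$ is a plane (and $\lambda_2=0$).

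\textbf{The main obstacle} is the gap step, and in particular the requirement that $\varepsilon_0$ be universal: unlike for the pure Willmore functional, the contributions of the $\lambda_1 H_f$ and $\lambda_2$ terms are not \emph{a priori} small, so one has to secure control of $H_f$ first --- from the De Lellis--Müller closeness when $\Sigma$ is compact, from properness together with $\int_\Sigma\|A^0_f\|^2\,d\mu_f<\infty$ when it is not --- before the absorption can be carried out. Handling the non-compact ends rigorously, where De Lellis--Müller is unavailable and one must extract decay of $\|A^0_f\|$ along each end by hand, is the most delicate point; the rest is standard.
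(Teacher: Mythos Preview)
The paper does not prove this statement; Theorem~\ref{thm:McCoy2013} is quoted verbatim from \cite{McCoy2013} as background in Section~\ref{sec:Rev}, and no argument for it is given here. There is therefore no ``paper's own proof'' to compare your proposal against.

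That said, your outline is broadly in the right spirit for how such classification results are obtained: reduce to umbilicity via a gap/absorption argument, then classify umbilic surfaces and plug back into \eqref{eq:ConstrainedWillmoreGradient}. A few cautions. First, the step ``elliptic estimates at the scale $\rho$ upgrade $W^{2,2}$-closeness to $C^\infty$-closeness'' is doing real work and is not automatic: you need a priori control of $H_f$ in $L^\infty$ (or at least in $L^p$ for large $p$) before the fourth-order equation becomes uniformly elliptic with bounded coefficients; De Lellis--M\"uller alone gives only $W^{2,2}$ control of the second fundamental form. Second, your absorption scheme must cope with the fact that $\lambda_1,\lambda_2$ are \emph{fixed but arbitrary}, so the $\lambda_1 H_f$ and $\lambda_2$ contributions are not small; you correctly flag this, but the mechanism by which they disappear in the pairing with $A^0_f$ (namely, that constant-$H$ pieces are orthogonal to the trace-free part after integration by parts) should be made explicit rather than left to ``now-controlled powers of $H_f$''. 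Third, in the non-compact case your sketch is essentially a wish list: extracting decay of $\|A^0_f\|$ along the ends from $\int\|A^0_f\|^2<\infty$ together with the Euler--Lagrange equation requires a genuine argument (monotonicity, or a Simon-type end analysis), and without it the cutoff errors need not vanish.
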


It is a straightforward calculation that spheres of radius $r_0$ remain spheres under the flow where the radii satisfy
$$
 \partial_t r = -\lambda_1 \frac 1r - \lambda_2.
$$

So if $\lambda_1$ and $\lambda_2$ or both non-negative and at least one of them is different from zero, the solution to \eqref{eq:ConstrainedWillmoreFlow} sub-converges to a round point
in finite time.
For the case that $\lambda_1 >0$, $\lambda_2 \geq 0$,  McCoy and Wheeler could extend the above result to the following statement about solutions to the gradient flow \eqref{eq:ConstrainedWillmoreFlow}.

\begin{theorem} [\cite{McCoy2016}] \label{thm:McCoy2016} 
 There is an $\varepsilon>0$ such that initial immersions $f_0 : \Sigma \rightarrow \mathbb R^3 $ of a compact manifold without boundary $\Sigma$, with positive signed inclosed volume and
 $$
  \en_{\lambda_1, \lambda_2}(f_0) \leq 4 \pi + \varepsilon_0
 $$
 sub-converge to a round point under the evolution equation \eqref{eq:ConstrainedWillmoreFlow}
\end{theorem}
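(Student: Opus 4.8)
The plan is to run a continuity argument resting on three facts: by \eqref{eq:EnEq} the energy $\en_{\lambda_1,\lambda_2}$ is non-increasing along \eqref{eq:ConstrainedWillmoreFlow}; the round sphere is rigid for the Willmore functional; and the term $-\lambda_1 H_f$ in the gradient \eqref{eq:ConstrainedWillmoreGradient} forces the enclosed region to shrink. \textbf{Step 1 (averaged closeness to a sphere).} Since $t\mapsto\en_{\lambda_1,\lambda_2}(f_t)$ is non-increasing, $\en_{\lambda_1,\lambda_2}(f_t)\le4\pi+\varepsilon_0$ for all $t\in[0,T)$. As long as $\vol(f_t)\ge0$ — which I verify in Step~2 by a bootstrap — the inequality $\wen\ge4\pi$ forces both $\lambda_1\,\ar(f_t)\le\varepsilon_0$ and $0\le\wen(f_t)-4\pi\le\varepsilon_0$. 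Combining the Gauss equation $H_f^2-K_f=\tfrac12|A^0_f|^2$ with Gauss--Bonnet gives $\wen(f)=2\pi\chi(\Sigma)+\tfrac12\int_\Sigma|A^0_f|^2\,d\mu_f$; hence for $\varepsilon_0$ small the hypothesis excludes positive genus (on closed surfaces of positive genus $\wen\ge2\pi^2$), so $\Sigma=\mathbb S^2$ and the scale-invariant smallness
\[
 \int_\Sigma|A^0_{f_t}|^2\,d\mu_{f_t}=2\bigl(\wen(f_t)-4\pi\bigr)\le2\varepsilon_0
\]
holds along the whole flow.

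\textbf{Step 2 (pointwise closeness; the quantitative input).} I would promote this to uniform closeness to a round sphere via the lifespan/concentration estimate for the locally constrained Willmore flow of Kuwert--Sch\"atzle and Liu \cite{Kuwert2002,Liu2012}: there is $\varepsilon_1>0$ such that if $\int_{B_\rho(x)}|A_{f_t}|^2\,d\mu_{f_t}\le\varepsilon_1$ for every $x$ on some scale $\rho$, the flow continues smoothly for a further time $\sim\rho^4$ with bounded curvature on that scale. Choosing $\rho\sim\ar(f_t)^{1/2}$ and inserting the $L^2$-smallness of $A^0$ from Step~1 (the umbilic contribution to $\int_{B_\rho}|A|^2$ on that scale being $\sim\ar(f_t)^{-1}\rho^2$, hence small), no curvature concentrates away from the scale of the surface itself, so the rescaled immersions $\ar(f_t)^{-1/2}(f_t-x_t)$ remain, after translation, in a fixed small $C^k$-neighbourhood of the standard sphere for every $k$; in particular each $f_t$ is an embedding with $\vol(f_t)\sim\ar(f_t)^{3/2}>0$ (closing the loop in Step~1), and the flow can cease only through $\ar(f_t)\to0$ (consistently, a smooth non-collapsed limit would be a critical point of $\en_{\lambda_1,\lambda_2}$, which Theorem~\ref{thm:McCoy2013} excludes for $\lambda_2\ge0$). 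Standard interpolation for surfaces near the round sphere further yields the scale-invariant coercivity
\[
 \ar(f_t)^{2}\,\|\wgrad(f_t)\|_{L^2(d\mu_{f_t})}^{2}\ \ge\ c\int_\Sigma|A^0_{f_t}|^2\,d\mu_{f_t}\ =\ 2c\bigl(\wen(f_t)-4\pi\bigr),
\]
a {\L}ojasiewicz-type inequality expressing that round spheres strictly minimise $\wen$ transverse to the $4$-parameter family of round spheres.

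\textbf{Step 3 (area collapse and the blow-up).} Differentiating the area along \eqref{eq:ConstrainedWillmoreFlow}, integrating by parts, and using $H_f^2-K_f=\tfrac12|A^0_f|^2$ and $\int_\Sigma H_{f_t}^2\,d\mu_{f_t}=\wen(f_t)$ gives
\[
 \frac{d}{dt}\ar(f_t)=-2\lambda_1\wen(f_t)-2\!\int_\Sigma|\nabla H_{f_t}|^2\,d\mu_{f_t}+2\!\int_\Sigma H_{f_t}^2|A^0_{f_t}|^2\,d\mu_{f_t}+2\lambda_2\!\int_\Sigma H_{f_t}\,d\mu_{f_t},
\]
which on a round sphere of radius $r$ reduces to $8\pi r\,\partial_t r=-8\pi\lambda_1-8\pi\lambda_2 r\le-8\pi\lambda_1$, consistent with $\partial_t r=-\lambda_1/r-\lambda_2$. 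In general $-2\lambda_1\wen(f_t)\le-8\pi\lambda_1$ by Willmore's inequality, the term $-2\int|\nabla H_{f_t}|^2$ has the good sign, and $2\lambda_2\int H_{f_t}\,d\mu_{f_t}\le0$ because $H_{f_t}<0$ near a round sphere; the only genuinely positive term is $2\int H_{f_t}^2|A^0_{f_t}|^2\,d\mu_{f_t}\le C\,(\wen(f_t)-4\pi)/\ar(f_t)$, using $\|H_{f_t}\|_\infty^2\le C/\ar(f_t)$. Combining the coercivity of Step~2 with the evolution inequality $\tfrac{d}{dt}(\wen(f_t)-4\pi)\le-\|\wgrad(f_t)\|_{L^2}^2+C\lambda_1^2$ (up to lower-order terms) forces, past a short initial interval, $\wen(f_t)-4\pi\le C\lambda_1^2\,\ar(f_t)^2$; hence the positive term is $\le C\lambda_1^2\ar(f_t)\le C\lambda_1\varepsilon_0$, which is $\le4\pi\lambda_1$ for $\varepsilon_0$ small. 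Therefore $\frac{d}{dt}\ar(f_t)\le-4\pi\lambda_1<0$, so $\ar(f_t)\le\ar(f_0)-4\pi\lambda_1 t$ and the flow becomes singular at some finite $T\le\ar(f_0)/(4\pi\lambda_1)$; by Step~2 the only possible singularity is $\ar(f_t)\to0$. Choosing $t_j\uparrow T$, the rescaled surfaces $(4\pi/\ar(f_{t_j}))^{1/2}(f_{t_j}-x_j)$ have Willmore energy $\wen(f_{t_j})\to4\pi$ and are $C^k$-close to round spheres, so a subsequence converges to a closed surface of Willmore energy $4\pi$, which by Willmore's rigidity is the unit sphere: the flow sub-converges to a round point.

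\textbf{Where the difficulty lies.} The real work is the two estimates imported in Step~2: the parabolic lifespan estimate ruling out curvature concentration while the surface has not yet collapsed, and the {\L}ojasiewicz-type coercivity forcing the Willmore deficit to decay like $\lambda_1^2\,\ar(f_t)^2$. These are exactly the delicate energy estimates that the present note circumvents by its scaling argument; Steps~1 and~3 become routine once they are available.
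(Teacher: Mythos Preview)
This statement is not proved in the present paper at all: it is quoted in the review Section~\ref{sec:Rev} as a result of McCoy and Wheeler \cite{McCoy2016}. The only information the paper gives about their argument is that its central estimate is the gradient comparison
\[
\|\grad_{\lambda_1,\lambda_2}(f)\|_{L^2(\mu_f)}^{2}\ \ge\ c_0\,\|\wgrad(f)\|_{L^2(\mu_f)}^{2}
\]
valid under the near-sphere hypothesis \eqref{eq:NearSpheres}, obtained via ``sophisticated energy estimates'' which the present paper explicitly sets out to avoid. Your outline never passes through this comparison of the constrained and unconstrained gradients; you replace it by a {\L}ojasiewicz inequality $\ar(f)^{2}\|\wgrad(f)\|_{L^2}^{2}\ge c\,(\wen(f)-4\pi)$ and a direct area-decay computation. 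So there is no ``paper's own proof'' to match against, and your route is in any case not the one the paper attributes to \cite{McCoy2016}.

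On the substance of your outline, Step~3 contains a genuine gap. Along \eqref{eq:ConstrainedWillmoreFlow} one has
\[
\frac{d}{dt}\wen(f_t)=-\langle\wgrad(f_t),\grad_{\lambda_1,\lambda_2}(f_t)\rangle_{L^2}
=-\|\wgrad(f_t)\|_{L^2}^{2}-\lambda_1\!\int_\Sigma|\nabla H_{f_t}|^{2}\,d\mu_{f_t}+\lambda_1\!\int_\Sigma H_{f_t}^{2}|A^0_{f_t}|^{2}\,d\mu_{f_t}+\ldots,
\]
so the cross term with $\lambda_1$ is not ``$+C\lambda_1^{2}$ up to lower order'': it contains exactly the bad quantity $\lambda_1\!\int H_{f_t}^{2}|A^0_{f_t}|^{2}\sim\lambda_1(\wen(f_t)-4\pi)/\ar(f_t)$ that you are trying to dominate in the area evolution. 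Your derivation of $\wen(f_t)-4\pi\le C\lambda_1^{2}\ar(f_t)^{2}$ is therefore circular as written, and without this bound the positive term in $\tfrac{d}{dt}\ar(f_t)$ is only $\le C\varepsilon_0/\ar(f_t)$, which diverges as the area shrinks and cannot be absorbed into the constant $-8\pi\lambda_1$. Breaking this circularity is precisely what the McCoy--Wheeler inequality $\|\grad_{\lambda_1,\lambda_2}\|^{2}\ge c_0\|\wgrad\|^{2}$ accomplishes (it lets you deduce decay of $\|\wgrad\|$, hence of $\wen-4\pi$, directly from the energy dissipation \eqref{eq:EnEq} without reintroducing the bad term), and it is the estimate you would actually need to import.
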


One of the main ingredients to the proof of their theorem above, is the highly non-trivial fact that under the condition \eqref{eq:NearSpheres}
we have
$$
 \|\grad_{\lambda_1, \lambda_2}\|^2_{L^2(\mu_f)} \geq c_0 \|\wgrad\|^2_{L^2(\mu_f)}
$$
for some $c_0 >0$ which they prove using sophisticated energy estimates. We will not make use of this estimate at all. Furthermore, their method did not allow them to treat the case $\lambda_2 <0$ which we can. Theorem \ref{thm:FormationOfSingularities} reduces the gap between the existence or better non-existence of critical points in Theorem~\ref{thm:McCoy2013} and the existence of finite time singularities in Theorem~\ref{thm:McCoy2016}.

\section{Existence of finite time singularities - an approach based on scaling} \label{sec:Exi}

Let us now prove the main results of this article on the existence of finite time singularities of the locally constrained Willmore flow. All these results are based on the different scaling behavior of the three terms building $\en_{\lambda_1, \lambda_2}.$ Apart from this, we will use that, by an inequality of Yau \cite{Li1982}, we have
$$
  4 \pi \cdot \#f^{-1}(x) \leq \en(f).
$$
So especially $\en(f) < 8 \pi$ implies that $f$ is an embedding. Peter Topping \cite[Lemma 1]{Topping1998} showed that
\begin{equation} \label{eq:EstDiam}
\diam(f) \leq \frac {2}{\pi} \sqrt{\ar(f) \wen(f)}.
\end{equation}
This estimate is a sharpened version of \cite[Lemma 1.1]{Simon1993}.
  
\begin{theorem} \label{thm:FiniteTime1}
Let $\lambda_1>0$  and $f_0: \Sigma \rightarrow \mathbb R^3$ be a compact smoothly immersed surface without boundary. Let us further assume that either $\lambda_2 =0$ or  
$ \lambda_2 >0$ and $f_0$ be an embedding with positive enclosed volume nad $\en_{\lambda_1, \lambda_2}(f_0) \leq 8 \pi$ Then the maximal time of smooth existence $T$ for the constrained Willmore flow with initial data $f_0$ satisfies
$$
 T \leq  \frac { \left( \en_{\lambda_1,\lambda_2}(f_0) \right)^2 - (4 \pi)^2 }{2 \pi^2 \lambda_1^2}.
$$
\end{theorem}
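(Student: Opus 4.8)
The plan is to convert the different homogeneities of the three summands of $\en_{\lambda_1,\lambda_2}$ into a uniform positive lower bound for $\|\grad_{\lambda_1,\lambda_2}(f_t)\|^2_{L^2(\mu_{f_t})}$, and then to feed this into the energy identity \eqref{eq:EnEq} together with the elementary facts $\en_{\lambda_1,\lambda_2}(f_t)\ge\wen(f_t)\ge4\pi$. The starting point is a scaling identity. Fix a smooth compact immersion $f$ without boundary and a point $c\in\mathbb R^3$, and consider the variation $f_s:=c+(1+s)(f-c)$, with $\partial_s f_s|_{s=0}=f-c$. Since $\wen$ is invariant under translations and dilations while $\ar(c+\rho g)=\rho^2\ar(g)$ and $\vol(c+\rho g)=\rho^3\vol(g)$, we have $\en_{\lambda_1,\lambda_2}(f_s)=\wen(f)+(1+s)^2\lambda_1\ar(f)+(1+s)^3\lambda_2\vol(f)$, and hence
$$2\lambda_1\ar(f)+3\lambda_2\vol(f)\;=\;\left.\frac{d}{ds}\right|_{s=0}\en_{\lambda_1,\lambda_2}(f_s)\;=\;\int_\Sigma\langle\grad_{\lambda_1,\lambda_2}(f),\nu_f\rangle\,\langle f-c,\nu_f\rangle\,d\mu_f,$$
the last equality being the definition of the $L^2$-gradient and the fact that $\grad_{\lambda_1,\lambda_2}(f)$ is a normal field. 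Choosing $c$ to be any point of $f(\Sigma)$, so that $|f-c|\le\diam(f)$ on $\Sigma$, Cauchy--Schwarz and Topping's inequality \eqref{eq:EstDiam} give
$$\bigl|2\lambda_1\ar(f)+3\lambda_2\vol(f)\bigr|\;\le\;\|\grad_{\lambda_1,\lambda_2}(f)\|_{L^2(\mu_f)}\,\diam(f)\,\ar(f)^{1/2}\;\le\;\frac{2}{\pi}\,\|\grad_{\lambda_1,\lambda_2}(f)\|_{L^2(\mu_f)}\,\ar(f)\,\wen(f)^{1/2}.$$

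Next I would observe that in both cases of the theorem $\lambda_2\vol(f_t)\ge0$ for all $t\in[0,T)$. For $\lambda_2=0$ this is trivial; for $\lambda_2>0$ it follows from the fact that $f_t$ stays embedded, since an embedded closed surface in $\mathbb R^3$ encloses positive volume. Embeddedness persists by an open--closed argument: the set of times at which $f_t$ is an embedding is open and contains $0$, and on it $\vol(f_t)>0$, so $\wen(f_t)=\en_{\lambda_1,\lambda_2}(f_t)-\lambda_1\ar(f_t)-\lambda_2\vol(f_t)<\en_{\lambda_1,\lambda_2}(f_t)\le\en_{\lambda_1,\lambda_2}(f_0)\le8\pi$ by \eqref{eq:EnEq} (note that $\lambda_1\ar(f_0),\lambda_2\vol(f_0)>0$ already force $\wen(f_0)<8\pi$), whence by Yau's inequality $f_t$ is injective; so this set of times is also closed, hence all of $[0,T)$. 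Granting this, $2\lambda_1\ar(f_t)+3\lambda_2\vol(f_t)\ge2\lambda_1\ar(f_t)>0$ and $\wen(f_t)\le\en_{\lambda_1,\lambda_2}(f_t)$, so dividing the last display (with $f=f_t$) by $\ar(f_t)$ yields
$$\|\grad_{\lambda_1,\lambda_2}(f_t)\|^2_{L^2(\mu_{f_t})}\;\ge\;\frac{\pi^2\lambda_1^2}{\wen(f_t)}\;\ge\;\frac{\pi^2\lambda_1^2}{\en_{\lambda_1,\lambda_2}(f_t)}.$$

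To finish, I would combine this with \eqref{eq:EnEq} to get $\frac{d}{dt}\bigl(\tfrac{1}{2}\en_{\lambda_1,\lambda_2}(f_t)^2\bigr)=\en_{\lambda_1,\lambda_2}(f_t)\,\frac{d}{dt}\en_{\lambda_1,\lambda_2}(f_t)\le-\pi^2\lambda_1^2$, integrate over $[0,t]$, and use $\en_{\lambda_1,\lambda_2}(f_t)\ge\wen(f_t)\ge4\pi$ to obtain $(4\pi)^2\le\en_{\lambda_1,\lambda_2}(f_t)^2\le\en_{\lambda_1,\lambda_2}(f_0)^2-2\pi^2\lambda_1^2t$ for every $t<T$; rearranging gives precisely the asserted bound on $T$.

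I expect the only genuinely delicate point to be the preservation of embeddedness --- equivalently $\vol(f_t)>0$ --- in the case $\lambda_2>0$: one must run the open--closed argument with the correct strict inequalities and use the remark that $\en_{\lambda_1,\lambda_2}(f_0)\le8\pi$ together with $\lambda_1\ar(f_0),\lambda_2\vol(f_0)>0$ forces $\wen(f_0)<8\pi$ strictly, which is what keeps Yau's inequality usable. The scaling identity is the conceptual heart, but it is elementary once one exploits the translation invariance of all three functionals to place $c$ on the surface; everything else is Cauchy--Schwarz and the cited inequalities of Topping and Yau, together with Willmore's lower bound $\wen\ge4\pi$.
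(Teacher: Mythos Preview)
Your proof is correct and follows essentially the same route as the paper: the same scaling identity $\int_\Sigma\langle\grad_{\lambda_1,\lambda_2}(f),f-p\rangle\,d\mu_f=2\lambda_1\ar(f)+3\lambda_2\vol(f)$, the same chain Cauchy--Schwarz $\to$ Topping $\to$ $\|\grad_{\lambda_1,\lambda_2}(f)\|_{L^2}^2\ge\pi^2\lambda_1^2/\wen(f)$, the same differential inequality $\frac{d}{dt}(\en_{\lambda_1,\lambda_2})^2\le -2\pi^2\lambda_1^2$, and the same use of Li--Yau to keep $f_t$ embedded (hence $\vol(f_t)\ge0$) when $\lambda_2>0$. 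The only cosmetic difference is that the paper argues embeddedness by looking at the \emph{first} self-intersection time $t_0$ and deriving $\en_{\lambda_1,\lambda_2}(f_{t_0})>8\pi$ directly, whereas you run an open--closed argument; both work, and the key point in either version is that the strictly positive area term forces $\wen(f_{t_0})<\en_{\lambda_1,\lambda_2}(f_{t_0})\le8\pi$ at the boundary time, which is exactly what you flag as the delicate step.
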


\begin{proof}
Let us start with an observation for a general immersion $f:\Sigma \rightarrow \mathbb R^3$. We consider the dilations 
$$ f_\alpha = \alpha (f-p)$$ around a fixed point $p$ in $f(\Sigma)$. Note that
the Willmore energy stays constant while the area behaves like $\alpha^2 \ar(f_1)$ and the volume goes like $\alpha^3 \vol(f_1)$. 
By the definition of the $L^2$ gradient we hence find
\begin{equation} \label{eq:Scaling}
 \int_{\Sigma} \grad_{\lambda_1, \lambda_2} (f) (f-p) d\mu_f= 
\frac {d} {d\alpha} \left.\left(\en_{\lambda_1, \lambda_2}(f_\alpha) \right) \right|_{\alpha=1} =  2 \lambda_1 \ar(f) + 3 \lambda_2 \vol(f).
\end{equation}
Using the Cauchy-Schwartz inequality together with \eqref{eq:EstDiam}, we have
\begin{multline}\label{eq:EstimateGradient}
  \int_{\Sigma} \grad_{\lambda_1, \lambda_2} (f) (f-p) d\mu \leq \|\grad_{\lambda_1, \lambda_2} (f)\|_{L^2(\mu)}\cdot \|f-p\|_{L^2(\mu)} \\ 
  \\ \leq \|\grad_{\lambda_1, \lambda_2} (f)\|_{L^2(\mu)}  \diam(f) \sqrt{\ar(f)}  \\
  \leq \frac 2 {\pi} \sqrt{\en(f)}\|\grad_{\lambda_1, \lambda_2} (f)\|_{L^2(\mu)} \mu(f).
\end{multline}
Combining equation \eqref{eq:Scaling} with the estimate \eqref{eq:EstimateGradient}, we get, if $ \lambda_2 \vol(f)\geq 0$, 
\begin{equation} \label{eq:EstGrad}
 \|\grad_{\lambda_1, \lambda_2} (f) \|^2_{L^2(\mu)} \geq  \frac {\pi^2 \lambda^2_1}{ \wen(f)}.
\end{equation}

Let us now consider the unique maximal solution $f_t$, $t \in [0,T)$, of the constrained Willmore flow \eqref{eq:ConstrainedWillmoreFlow} with initial data $f_0$ as in the statement of the theorem. Under the assumption that $\lambda_2 \vol(f)$ stays non-negative along the flow we get from \eqref{eq:EnEq} and \eqref{eq:EstGrad} for all $t \in [0,T)$
$$
  \frac d {dt} \en_{\lambda_1, \lambda_2} (f_t)= -\|\grad_{\lambda_1, \lambda_2} (f_t)\|^2_{L^2 (d\mu)} \leq - \frac {\pi^2 \lambda_1^2} {\en_{0,0}(f_t)} \leq - \frac {\pi^2 \lambda_1^2} {\en_{\lambda_1,\lambda_2}(f_t)}.
$$
Hence the differential inequality
$$
 \frac d {dt} \left(\en_{\lambda_1, \lambda_2} \right)^2 \leq -2 \pi^2 \lambda_1 ^2
$$
holds. The maximal time of smooth existence would thus satisfy
$$
  T \leq  \frac { \left( \en_{\lambda_1,\lambda_2}(f_0) \right)^2 - (4 \pi)^2 }{ 2 \pi^2 \lambda_1^2} 
$$
as for later times $t$ we would have $\en (f_t) \leq \en_{\lambda_1, \lambda_2} (f_t) < 4\pi$, which is impossible.
This concludes the proof for the case that $\lambda_2 =0$.

Let us finally show that in the case $\lambda_2 >0$ every initial embedding with $\en_{\lambda_1, \lambda_2}(f_0) \leq 8 \pi$  and positive volume stays embedded and hence $\vol(f_t)$  stays non-negative. Otherwise there would be a  first time $t_0$ at which a self intersection occurs and hence especially $vol(f_{t_0})\geq 0$, $\ar(f_{t_0})>0$. But then due to the celebrated inequality by Li and Yau \cite{Li1982} we would have  $\en_{0,0}(f_{t_0})\geq 8 \pi$ which would imply
$$
 \en_{\lambda_1, \lambda_2}(f_{t_0}) > 8 \pi \geq \en_{\lambda_1, \lambda_2}(f_0).
$$
This is impossible since the energy monotonically decreases in time.
 This concludes the proof.
\end{proof}

Let us contemplate on the proof of Theorem~\ref{thm:FiniteTime1} a bit further to see what can be saved of it, if we do not assume that $\lambda_2 \vol(f) \geq 0$. Instead we want to use the different scaling of the surface area and the volume that guarantees that, for $\ar(f)$ small, the surface area dominates the inclosed volume.

The proof of Theorem~\ref{thm:FiniteTime1} is based on the
fact that for any immersion $f: \Sigma \rightarrow \mathbb R^3$, $p \in f(\Sigma)$, $f_\alpha = \alpha (f-p)$ we have
\begin{equation*}
   \frac d {d\alpha} \en_{\lambda_1, \lambda_2} (f_\alpha) |_{\alpha=1} = 2\lambda_1 \ar(f)+ 3 \lambda_2 \vol(f)
\end{equation*}
which we have estimated by $2\lambda_1 \ar(f)$ from below under the assumption that $\lambda_2 \vol(f) \geq 0$. If we just assume that $\lambda_1 > 0$, but not $\lambda_2 \geq 0$,
we can estimate, using the isoperimetric inequality $\vol(f) \leq \frac{\ar(f)^{\frac 32}} {6 \pi^{\frac 12}}$, 
\begin{align*}
 \frac d {d\alpha} \en_{\lambda_1, \lambda_2} (f_\alpha) |_{\alpha=1} = 2\lambda_1 \ar(f)+ 3 \lambda_2 \vol(f) \geq \left( 2 \lambda_1 - |\lambda_2| \frac {\ar(f)^{\frac 1 2}}{2 \pi^{\frac 12}} \right) \ar(f) .
\end{align*}
So if $\ar(f) < \varepsilon_1(\lambda_1, \lambda_2)=\frac {\lambda_1^2} {|\lambda_2|^2} 16 \pi$, we can still bound this derivative from below by a positive multiple of $\ar(f)$. Combining this with the estimate \eqref{eq:EstimateGradient} we get
\begin{equation} \label{eq:EstGrad2}
\|\grad_{\lambda_1, \lambda_2} (f) \|^2_{L^2(\mu)} \geq  \frac {\pi^2 \left( \lambda_1 - |\lambda_2| \frac {\ar(f)^{\frac 1 2}}{4 \pi^{\frac 12}} \right)^2}{ \wen(f)}.
\end{equation}

We can only use our control of the energy $\en_{\lambda_1, \lambda_2}$ to bound the surface area of the immersions along the flow. For this we again use the isoperimetric inequality to show
$$
 \en_{\lambda_1, \lambda_2} (f) = \en(f) + \lambda_1 \ar(f) + \lambda_2 \vol(f) \geq \en(f) + (\lambda_1 - |\lambda_2| \frac {\ar(f)^{\frac 1 2 }} {6 \pi^{\frac 1 2}}) \ar(f).
$$
Under the assumption that $\ar (f)\leq \varepsilon_1$ we can estimate this further and get
\begin{equation} \label{eq:EsMu}
 \ar(f) \leq \frac 3 {\lambda_2}\left(\en_{\lambda_1, \lambda_2} (f) -\wen(f) \right) .
\end{equation}
So if we assume that $\en_{\lambda_1, \lambda_2}(f) - \en(f) < \varepsilon_2(\lambda_1, \lambda_2) = \frac { \lambda_1^2}{3 \lambda_2^2} 16 \pi$ we can recover the estimate
$$
 \ar(f) < \frac {\lambda_1 ^2 }{\lambda_2^2} 16 \pi.
$$

Even for the case of negative $\lambda_2<0$ we can show, with the help of the estimates above, the existence of finite time singularities if both $\en_{\lambda_1,\lambda_2}(f_0)$ is close to $4 \pi$ and the area of the initial surface is smaller than $\varepsilon_1(\lambda_1, \lambda_2)$. 

\begin{theorem} \label{thm:FiniteTime3}
  For $\lambda_1 >0, \lambda_2 <0$ let  $f_0: \Sigma \rightarrow \mathbb R^3$ be an immersion of a compact surface $\Sigma$ without boundary satisfying
 \begin{align}
  \ar(f_0) &< \varepsilon_1(\lambda_1, \lambda_2) ,\label{eq:Eps1}\\
  \en_{\lambda_1, \lambda_2}(f_0) & < 4 \pi + \varepsilon_2 (\lambda_1, \lambda_2) \label{eq:Eps2}
 \end{align}
 where as above $\varepsilon_1(\lambda_1, \lambda_2)  = \frac {\lambda_1^2} {\lambda_2^2} 16 \pi$ and $\varepsilon_2 (\lambda_1, \lambda_2)  = \frac {16 \lambda_1^3} {3\lambda_2^2} \pi$.
 Then the maximal time of existence $T$ of the constrained Willmore flow \eqref{eq:ConstrainedWillmoreFlow} with initial data $f_0$ is finite.
\end{theorem}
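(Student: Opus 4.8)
The plan is to argue by contradiction in the spirit of Theorem~\ref{thm:FiniteTime1}; the new difficulty for $\lambda_2 < 0$ is that the sign of $\lambda_2 \vol(f_t)$ is not controlled, so before invoking a coercivity estimate for the gradient one must first propagate the area bound $\ar(f_t) < \varepsilon_1(\lambda_1, \lambda_2)$ along the flow. Let $f_t$, $t \in [0,T)$, be the maximal solution with initial datum $f_0$. By \eqref{eq:EnEq} the energy is non-increasing, so $\en_{\lambda_1, \lambda_2}(f_t) \le \en_{\lambda_1, \lambda_2}(f_0) < 4\pi + \varepsilon_2(\lambda_1, \lambda_2)$ for all $t$, and since $\wen(f_t) \ge 4\pi$ by Willmore's inequality this gives $\en_{\lambda_1, \lambda_2}(f_t) - \wen(f_t) < \varepsilon_2(\lambda_1, \lambda_2)$. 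I would then propagate the area bound by a continuity argument: let $I \subseteq [0,T)$ be the set of $t$ with $\ar(f_s) \le \varepsilon_1(\lambda_1, \lambda_2)$ for all $s \le t$, which is nonempty by \eqref{eq:Eps1} and closed in $[0,T)$. On $I$ the condition $\ar(f_s) \le \varepsilon_1$ is exactly what makes the isoperimetric argument behind \eqref{eq:EsMu} applicable (with constant $3/\lambda_1$), so together with the previous bound
$$
 \ar(f_s) \le \frac{3}{\lambda_1}\bigl(\en_{\lambda_1, \lambda_2}(f_s) - \wen(f_s)\bigr) \le \frac{3}{\lambda_1}\bigl(\en_{\lambda_1, \lambda_2}(f_0) - 4\pi\bigr) < \frac{3}{\lambda_1}\,\varepsilon_2(\lambda_1, \lambda_2) = \varepsilon_1(\lambda_1, \lambda_2)
$$
with \emph{strict} inequality on $I$; by continuity of $t \mapsto \ar(f_t)$ the set $I$ is then also open, hence $I = [0,T)$ and $\ar(f_t) \le a_0 := \frac{3}{\lambda_1}(\en_{\lambda_1, \lambda_2}(f_0) - 4\pi) < \varepsilon_1(\lambda_1, \lambda_2)$ uniformly in $t$.

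With this uniform area bound the estimate \eqref{eq:EstGrad2} becomes genuinely uniform: $c_0 := \lambda_1 - |\lambda_2|\,a_0^{1/2}/(4\pi^{1/2})$ is a fixed \emph{positive} constant (positivity is precisely the inequality $a_0 < \varepsilon_1$), and the isoperimetric inequality bounds $\wen(f_t) = \en_{\lambda_1, \lambda_2}(f_t) - \lambda_1\ar(f_t) - \lambda_2\vol(f_t) \le \en_{\lambda_1, \lambda_2}(f_0) + |\lambda_2|\,a_0^{3/2}/(6\pi^{1/2}) =: C_1$ from above, while $\en_{\lambda_1, \lambda_2}(f_t) \ge \wen(f_t) + \lambda_2\vol(f_t) \ge 4\pi - |\lambda_2|\,a_0^{3/2}/(6\pi^{1/2})$ stays bounded from below. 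Feeding \eqref{eq:EstGrad2} into \eqref{eq:EnEq},
$$
 \frac{d}{dt}\,\en_{\lambda_1, \lambda_2}(f_t) = -\|\grad_{\lambda_1, \lambda_2}(f_t)\|_{L^2(\mu)}^2 \le -\frac{\pi^2 c_0^2}{\wen(f_t)} \le -\frac{\pi^2 c_0^2}{C_1},
$$
so $\en_{\lambda_1, \lambda_2}(f_t)$ would decrease at least linearly in $t$ and eventually fall below its own lower bound $4\pi - |\lambda_2|\,a_0^{3/2}/(6\pi^{1/2})$, which is absurd. Integrating this inequality and comparing with the lower bound yields an explicit finite upper bound for $T$.

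The step I expect to be the main obstacle is the propagation of the area bound: a priori nothing prevents $\ar(f_t)$ from growing, and both \eqref{eq:EstGrad2} and \eqref{eq:EsMu} are vacuous once $\ar(f) > \varepsilon_1$. The argument closes because \eqref{eq:EsMu} is self-improving --- it estimates the area by the energy defect $\en_{\lambda_1, \lambda_2} - \wen$, which is pinned down by the initial data and can only decrease along the flow --- so the area is in fact trapped strictly below $\varepsilon_1$ for all $t \in [0,T)$; after that the rest is a routine differential-inequality argument identical in spirit to the one in Theorem~\ref{thm:FiniteTime1}.
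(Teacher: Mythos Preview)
Your proposal is correct and follows essentially the same route as the paper: both propagate the area bound $\ar(f_t)<\varepsilon_1$ by the same continuity argument based on \eqref{eq:EsMu}, and then feed the resulting uniform bound into \eqref{eq:EstGrad2} to force $\en_{\lambda_1,\lambda_2}(f_t)$ to decrease at a definite rate. The only cosmetic difference is in the final step: the paper bounds $\wen(f_t)\le \en_{\lambda_1,\lambda_2}(f_t)$ (valid under the area bound by the same isoperimetric estimate) and works with $\frac{d}{dt}(\en_{\lambda_1,\lambda_2})^2$, whereas you bound $\wen(f_t)$ by a fixed constant $C_1$ and obtain linear decay of $\en_{\lambda_1,\lambda_2}$ directly.
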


\begin{proof}
Note that for all times $t \in [0,T)$ for which $\ar(f_t) \leq \varepsilon_1$ inequality \eqref{eq:EsMu} and $\wen(f_t) \geq 4 \pi$ imply
\begin{equation} \label{eq:EsMu+}
\begin{aligned}
  \ar(f_{t}) &\leq \frac 3 {\lambda_1}( \en_{\lambda_1, \lambda_2} (f_{t}) - \wen(f_t)) \leq \frac 3 {\lambda_1} (\en_{\lambda_1, \lambda_2} (f_{0})- 4 \pi)  \\ & < \frac 3 {\lambda_1} \varepsilon_2 (\lambda_1, \lambda_2)
 = \frac{\lambda_1^2}{\lambda_2^2} 16 \pi = \varepsilon_1(\lambda_1, \lambda_2). 
\end{aligned}
 \end{equation}
Let us use this to show that condition \eqref{eq:Eps1} holds for all $t \in [0,T).$ If not, the intermediate value theorem would give us a time $t_0 \in [0,T]$ such that 
$$
 \ar(f_{t_0}) = \varepsilon_1 (\lambda_1, \lambda_2).
$$
But then \eqref{eq:EsMu+} would imply $\ar(f_{t_0})< \varepsilon_1 (\lambda_1, \lambda_2) $.
So \eqref{eq:Eps1} holds for all $t \in [0,T)$ and hence also \eqref{eq:EsMu+}.
 
Using \eqref{eq:EnEq}, \eqref{eq:EstGrad2}, and \eqref{eq:EsMu+}, we get
 \begin{align*}
\frac d {dt} \en_{\lambda_1, \lambda_2 }(f_t)& = -\|\grad_{\lambda_1, \lambda_2} (f) \|^2_{L^2(\mu)} \leq - \frac {\pi^2 \left( \lambda_1 - |\lambda_2| \frac {\ar(f)^{\frac 1 2}}{4 \pi^{\frac 12}} \right)^2}{ \wen(f_t)} 
\\ & \leq - \frac {\pi^2 \left( \lambda_1 - |\lambda_2| \frac {\ar(f)^{\frac 1 2}}{4 \pi^{\frac 12}} \right)^2}{ \en_{\lambda_1, \lambda_2}(f_t)}.
\end{align*}
and hence
\begin{equation*}
 \frac d {dt} (\en_{\lambda_1, \lambda_2 }(f_t))^2 \leq - \pi^2 \left( \lambda_1 - |\lambda_2| \frac {\ar(f)^{\frac 1 2}}{4 \pi^{\frac 12}} \right)^2.
\end{equation*}
Since the term on the right-hand side is bounded away from zero by \eqref{eq:EsMu+}, a singularity must form in finite time.
\end{proof}

\begin{remark}
 Indeed, the assumption $\ar(f_0) \leq \frac {\lambda_1 ^2 } {\lambda_2^2} 16 \pi $ is optimal since, for $\lambda_1 >0$ and $\lambda_2 <0$, any sphere of radius $-\frac {2\lambda_1}{\lambda_2}$ is a critical point of $\en_{\lambda_1, \lambda_2}$ and has area $\frac {\lambda_1 ^2 } {\lambda_2^2} 16 \pi$. In contrast to this, we do not expect $\varepsilon_2$ to be optimal.
\end{remark}

With essentially the same technique we can prove that the condition $\en_{\lambda_1, \lambda_2}(f_0) \leq 8 \pi$ in Theorem~\ref{thm:FiniteTime1} can be weakened to $\en_{\lambda_1, \lambda_2} (f_0)\leq 8 \pi + \min\{\varepsilon_2(\lambda_1, \lambda_2), 8 \pi\}$. But we additionally have to assume that $\ar(f_0)$ is sufficiently small if the inclosed volume has a negative sign. In this case we will furthermore face an additional problem: Even if we assume that $\vol (f_0) >0$ and the initial surface is embedded, the inequality of Li and Yau does not tell us that the surface stays embedded along the flow. 

Instead, we will use that every sphere eversion must have a quadruple point (cf. \cite{Max1981}). So if after some time we have the situation that $f_{t_0}$ is an embedding with $\vol f_{t_0} < 0$ then there must have been a quadruple point before $t_0$, i.e. due to the inequality of Li and Yau there was a time $t \in (0,t_0)$ such that
$$
 \wen(f_t) \geq 16 \pi.
$$
This allows us to prove

\begin{theorem} \label{thm:FiniteTime2}
 For $\lambda_1 >0, \lambda_2 > 0$ let  $f_0: \Sigma \rightarrow \mathbb R^3$ be an embedding of a compact manifold $\Sigma$ without boundary satisfying $\vol(f_0) >0$
 and
 \begin{align}
  \en_{\lambda_1, \lambda_2}(f_0) & < 8 \pi + \min\{\varepsilon_2 (\lambda_1, \lambda_2),8 \pi\} \label{eq:Eps3}.
 \end{align}
 Then the maximal time of existence $T$ of the constrained Willmore flow \eqref{eq:ConstrainedWillmoreFlow} with initial data $f_0$ is finite.
\end{theorem}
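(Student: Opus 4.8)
I would reduce the theorem to a single dichotomy along the flow and then run the scaling argument of Theorems~\ref{thm:FiniteTime1} and \ref{thm:FiniteTime3}. The dichotomy to be proved is: \emph{for every $t\in[0,T)$, either $f_t$ is an embedding with $\vol(f_t)>0$, or $\ar(f_t)<\varepsilon_1(\lambda_1,\lambda_2)$}. Note first that this dichotomy forces $\lambda_1\ar(f_t)+\lambda_2\vol(f_t)\ge0$ for all $t$: in the first case this is clear, and in the second the isoperimetric inequality $\vol(f)\le\ar(f)^{3/2}/(6\pi^{1/2})$ together with $\ar(f_t)<\varepsilon_1$ gives $\lambda_1\ar(f_t)+\lambda_2\vol(f_t)\ge\frac{\lambda_1}{3}\ar(f_t)\ge0$ (exactly the estimate preceding Theorem~\ref{thm:FiniteTime3}). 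Hence $\wen(f_t)\le\en_{\lambda_1,\lambda_2}(f_t)\le\en_{\lambda_1,\lambda_2}(f_0)<16\pi$ for all $t$, so by Li--Yau no quadruple point occurs along the flow; since $f_0$ is an embedding of positive volume, the eversion argument recalled before the statement then shows that no $f_t$ is an embedding of negative volume.

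Granting the dichotomy I would finish as follows. If $f_t$ is an embedding with $\vol(f_t)>0$ then \eqref{eq:EstGrad} applies and $\|\grad_{\lambda_1,\lambda_2}(f_t)\|_{L^2}^2\ge\pi^2\lambda_1^2/\wen(f_t)\ge\pi^2\lambda_1^2/\en_{\lambda_1,\lambda_2}(f_t)$. Otherwise $\ar(f_t)<\varepsilon_1$, and $f_t$ is not an embedding (an embedded closed surface has non-zero signed volume, which by the above cannot be negative here), so $\wen(f_t)\ge8\pi$ by Li--Yau; using \eqref{eq:Eps3}, $\frac{\lambda_1}{3}\ar(f_t)\le\lambda_1\ar(f_t)+\lambda_2\vol(f_t)=\en_{\lambda_1,\lambda_2}(f_t)-\wen(f_t)\le\en_{\lambda_1,\lambda_2}(f_0)-8\pi<\min\{\varepsilon_2,8\pi\}\le\varepsilon_2$, hence $\ar(f_t)\le\mu_1:=3(\en_{\lambda_1,\lambda_2}(f_0)-8\pi)/\lambda_1<\varepsilon_1$, and \eqref{eq:EstGrad2} gives $\|\grad_{\lambda_1,\lambda_2}(f_t)\|_{L^2}^2\ge\pi^2\big(\lambda_1-|\lambda_2|\mu_1^{1/2}/(4\pi^{1/2})\big)^2/\en_{\lambda_1,\lambda_2}(f_t)$ with a fixed positive numerator. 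Since $\en_{\lambda_1,\lambda_2}(f_t)\ge4\pi$ in both cases, \eqref{eq:EnEq} yields $\frac{d}{dt}\big(\en_{\lambda_1,\lambda_2}(f_t)\big)^2\le-2\pi^2 m$ with $m:=\min\{\lambda_1^2,(\lambda_1-|\lambda_2|\mu_1^{1/2}/(4\pi^{1/2}))^2\}>0$, so $T\le\big((\en_{\lambda_1,\lambda_2}(f_0))^2-16\pi^2\big)/(2\pi^2 m)<\infty$, just as in Theorem~\ref{thm:FiniteTime1}.

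The dichotomy itself I would prove by a continuity argument. The set $S$ of times at which the alternative holds is open and contains $0$; suppose $s:=\inf([0,T)\setminus S)<T$. As in the first paragraph, $\lambda_1\ar(f_\tau)+\lambda_2\vol(f_\tau)\ge0$ on $[0,s)$, hence $\wen(f_\tau)\le\en_{\lambda_1,\lambda_2}(f_0)<16\pi$ on $[0,s]$ by continuity, so there is no quadruple point on $[0,s]$ and no $f_\tau$, $\tau\in[0,s]$, is an embedding of negative volume. Since $s\notin S$, $f_s$ is not an embedding of positive volume, hence not an embedding at all, so $\wen(f_s)\ge8\pi$ while $\ar(f_s)\ge\varepsilon_1$. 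If $\ar(f_s)=\varepsilon_1$, then the isoperimetric inequality and the identities $\lambda_1\varepsilon_1=3\varepsilon_2$, $\lambda_2\varepsilon_1^{3/2}/(6\pi^{1/2})=2\varepsilon_2$ (both immediate from the definitions of $\varepsilon_1,\varepsilon_2$) give $\en_{\lambda_1,\lambda_2}(f_s)\ge8\pi+\lambda_1\varepsilon_1-\lambda_2\varepsilon_1^{3/2}/(6\pi^{1/2})=8\pi+\varepsilon_2>\en_{\lambda_1,\lambda_2}(f_0)$, a contradiction. If $\ar(f_s)>\varepsilon_1$, then $\ar(f_\tau)>\varepsilon_1$ for $\tau$ in an interval ending at $s$, so there the alternative can hold only through its first branch and $f_\tau$ is an embedding of positive volume; hence $\vol(f_s)\ge0$, and $\en_{\lambda_1,\lambda_2}(f_s)\ge8\pi+\lambda_1\ar(f_s)>8\pi+\lambda_1\varepsilon_1=8\pi+3\varepsilon_2>\en_{\lambda_1,\lambda_2}(f_0)$, again a contradiction. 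Thus $s=T$ and the dichotomy holds.

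I expect the last step to be the main obstacle: one must propagate the dichotomy across the time intervals where the surface self-intersects and may re-embed, and what prevents $\ar$ from escaping past $\varepsilon_1$ during such excursions is precisely the sharp arithmetic relating $\varepsilon_1$, $\varepsilon_2$ and the isoperimetric constant, together with the jump $\wen\ge8\pi$ at the instant embeddedness is lost — this is what produces the contradiction at $s$ even when $\en_{\lambda_1,\lambda_2}(f_0)$ is allowed up to almost $16\pi$. The other indispensable input, without which a negative-volume embedding could appear along the flow and break this bookkeeping, is the quadruple-point theorem for eversions, used exactly as indicated before the statement.
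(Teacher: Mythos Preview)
Your proof is correct and follows essentially the same strategy as the paper: a continuity argument, powered by the quadruple-point theorem and Li--Yau, ensures that whenever the favorable sign condition on the volume term fails one has $\ar(f_t)<\varepsilon_1$; then the scaling gradient estimates \eqref{eq:EstGrad} and \eqref{eq:EstGrad2} force $\tfrac{d}{dt}(\en_{\lambda_1,\lambda_2})^2\le -c<0$ and hence $T<\infty$.

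The only noteworthy difference is the phrasing of the dichotomy. The paper splits on $\vol(f_t)\ge 0$ versus $\vol(f_t)<0$ and then argues that the set $A=\{t:\vol(f_t)<0\}$ is trapped under the area bound $\ar<\varepsilon_1$; you split instead on ``$f_t$ is an embedding with $\vol(f_t)>0$'' versus ``$\ar(f_t)<\varepsilon_1$''. Your version is arguably tidier: it isolates the continuity step (existence of the dichotomy) from the gradient-estimate step, and the boundary case $\ar(f_s)=\varepsilon_1$ is dispatched by a direct energy computation using the identities $\lambda_1\varepsilon_1=3\varepsilon_2$, $\lambda_2\varepsilon_1^{3/2}/(6\sqrt{\pi})=2\varepsilon_2$ rather than by the somewhat implicit bootstrapping in the paper. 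The paper's choice, on the other hand, makes the use of \eqref{eq:EstGrad} (valid whenever $\lambda_2\vol\ge 0$, embedded or not) slightly more transparent. Both routes consume exactly the same external inputs (Li--Yau, Topping's diameter bound, the isoperimetric inequality, and Max--Banchoff on eversions), and the resulting quantitative bound on $T$ is the same.
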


\begin{proof}
 We will give different lower bounds for $$\frac d {dt} \en_{\lambda_1, \lambda_2} (f_t ) =2 \frac{\|\grad_{\lambda_1, \lambda_2}(f_t)\|^2_{L^2}}{\en_{\lambda_1, \lambda_2}(f_t)}$$ depending on whether the inclosed volume is non-negative or not. If $\vol(f_t)\geq 0$, we can proceed as in the proof of Theorem~\ref{thm:FiniteTime1} and get
 \begin{equation*} 
  \frac d {dt} \en_{\lambda_1 , \lambda_2}(f_t)) = - \|\grad_{\lambda_1, \lambda_2} (f_t) \|^2_{L^2(\mu)} \leq  \frac {\pi^2 \lambda^2_1}{ \en_{0,0}(f_t)} \leq -\frac {\pi^2 \lambda^2_1}{ \en_{\lambda_1, \lambda_2}(f_t)}.
 \end{equation*}
 Thus
 \begin{equation} \label{eq:Grad1}
  \frac d {dt} (\en_{\lambda_1 , \lambda_2}(f_t)))^2 \leq - 2{\pi^2 \lambda^2_1}.
 \end{equation}

 To deal with the times $t\in [0,T)$ for which $\vol(f_t)<0$, we first observe that 
 for times $t$ for which $\ar(f_t) \leq \varepsilon_1(\lambda_1, \lambda_2)$ and
 $f_t$ is not an embedding, we have due to \eqref{eq:EsMu} and Li and Yau's inequality
 \begin{equation} \label{eq:EsMuBest}
  \ar(f_{t_0}) \leq \frac 3 {\lambda_1} \left( \en_{\lambda_1, \lambda_2} (f_{t_0}) - 8 \pi \right)
  \leq \frac 3 {\lambda_1} \left( \en_{\lambda_1, \lambda_2} (f_{0}) - 8 \pi \right) < \frac{\lambda_1^2}{\lambda_2^2} 16 \pi = \varepsilon_1(\lambda_1, \lambda_2).
 \end{equation}
 
 Furthermore, we note that if for a time $t_0 \in [0,T)$ we have 
 $$
  \ar(f_t) \leq \varepsilon_1(\lambda_1, \lambda_2) \quad \forall t \in [0,t_0] \text{ with } \vol(f_t) <0,
 $$
 then 
 $$
  \wen(f_t) < 16 \pi \quad \forall t \in [0,t_0],
 $$
 since for all $t$ with $\vol(f_t) \geq 0$ we have
 $$
  \wen (f_t) \leq \en_{\lambda_1, \lambda_2}(f_t) \leq  \en_{\lambda_1, \lambda_2}(f_0)<16 \pi
 $$
 and for all $t$ with $\ar(f_t) < \varepsilon_1$ we also get
 $$
  \wen (f_t) \leq \en_{\lambda_1, \lambda_2}(f_t) \leq  \en_{\lambda_1, \lambda_2}(f_0)<16 \pi.
 $$
 But this implies that $f_{t_0}$ is not embedded as otherwise, due to the result of \cite{Max1981}, there would be a time $t \in [0,t_0]$ such that $f_t$ has a quadruple point and hence by the inequality of Li and Yau
 $$
  \wen(f_{t}) \geq 16.
 $$
 Combining theses two observations, we have shown that for times $t_0 \in [0,T)$ with $\ar(f_{t_0}) \leq \varepsilon_1$ and such that $\ar(f_t) \leq \varepsilon_1$ for all $t \in [0,t_0]$ with $\vol (f_t) <0$  we have \eqref{eq:EsMuBest}
 
Let us use this to show that for all times $t \in [0,T]$ with $\vol(f_t)< 0$ we obtain
 \begin{equation} \label{eq:EsMu2}
  \ar(f_t) < \varepsilon_1(\lambda_1, \lambda_2)
 \end{equation}
 be a continuity argument.
 We set 
 \begin{equation*}
  A:=\{ t \in (0,T): \vol(f_t) < 0\},
 \end{equation*}
 where $A$ of course may be the empty set in which case there is nothing to show. 
 
 As $0 \notin A$ we have $\vol(f_{t}) =0$ for all $t \in \partial A$ and hence $f_t$ cannot be an embedding. Thus by \eqref{eq:EsMuBest}
 $$
  \ar(f_{a_i}) <  \varepsilon_1(\lambda_1, \lambda_2).
  $$
  for all $t \in \partial A$.
 If \eqref{eq:EsMu2} does not hold on $A$, due to the intermediate value theorem  there would be a first time $t_0 \in A$ with
 $\ar(f_{t_0}) = \frac {\lambda_1 ^2} {\lambda_2 ^2} 16 \pi$. But then the discussion above implies that we have \eqref{eq:EsMuBest}, especially
 $$
  \ar(f_{t_0})  <  \varepsilon_1,
 $$
 contradicting our choice of $t_0$. Thus \eqref{eq:EsMu2} holds for all times $t \in [0,T]$ with $\vol(f_t)< 0$.
 
 From inequality \eqref{eq:EstGrad2} we get
 \begin{equation*} 
  \frac d {dt} \en_{\lambda_1, \lambda_2}(f_t) = \|\grad_{\lambda_1, \lambda_2} (f_t) \|^2_{L^2(\mu)} \leq  - \frac {\pi^2 \left( \lambda_1 - |\lambda_2| \frac {\ar(f_t)^{\frac 1 2}}{4 \pi^{\frac 12}} \right)^2}{ \wen(f_t)}
 \end{equation*}
 and thus 
 \begin{equation} \label{eq:Grad2}
   \frac d {dt} \left(\en_{\lambda_1, \lambda_2} (f_t ) \right)^2 \leq - 2 \pi^2 \left( \lambda_1 - |\lambda_2| \frac {\ar(f_t)^{\frac 1 2}}{4 \pi^{\frac 12}} \right)^2.
 \end{equation}
 Since $\ar(f_t) \leq \varepsilon_1$ for all $t \in A$, inequality \eqref{eq:EsMu} holds for all $t \in A$ and shows that the right hand side in \eqref{eq:EstGrad2} is bounded away from zero for all $t \in A$.

 Hence, the estimates \eqref{eq:Grad1} and \eqref{eq:Grad2} imply that singularities form in finite time. 
\end{proof}
The Theorems~\ref{thm:FiniteTime1}, \ref{thm:FiniteTime3}, and \ref{thm:FiniteTime2} imply Theorem~\ref{thm:FormationOfSingularities}.


\section{Blowup analysis of finite time singularities} \label{sec:Blo}

For the convenience of the reader let us briefly repeat the blowup construction at a singularity in \cite{McCoy2016} which they perform at the beginning of the proof of Theorem~1.4 on page 25. This result extends the construction of a blowup by Kuwert and Sch\"atzle for the Willmore flow \cite{Kuwert2001,Kuwert2004}.

\begin{theorem} \label{thm:Blowup}
There are constants $\varepsilon_0 >0$ and $c_0 >0$ such that the following holds: Let $f: \Sigma \times [0,T) \rightarrow \mathbb R^3$ be the maximal solution to \eqref{eq:ConstrainedWillmoreFlow} with $T < \infty$, i.e. singularities occur in finite time. Then there is a sequence of times $t_j \uparrow T$, of radii $r_j \downarrow 0$ and points $x_j \in \mathbb R^n$
such that the rescaled flows
$$
 f_j: \Sigma \times [0,c_0] \rightarrow \mathbb R^3, f_j(p,t) := \frac 1 {r_j} (f(p,t_j + r_j^4 t)-x_j)
$$
satisfy
$$
\int_{f_j^{-1}(B_1(0))} \|A_{f_j}\|^2 d\mu_{f_j} \geq \varepsilon_0
$$
and converge smoothly locally to a smooth family of proper immersions 
$$
\tilde f : \tilde \Sigma \times [0,c_0] \rightarrow \mathbb R ^3
$$
in the following sense: We can represent 
$$
 f_j  (\phi_j,t) = \tilde f + u_j(\cdot,t)
$$
where
\begin{itemize}
 \item $\phi_j : \tilde f^{-1}(B_j(0)) \rightarrow U_j$ is a diffeomorphism,
 \item $f_j^{-1}(B_R) \subset U_j$ for $j\geq j(R)$,
 \item $u_j \in C^\infty(\tilde \Sigma\times [0,c_0], \mathbb R^n)$ is normal along $\tilde f$,
 \item $\|\nabla^k u_j \|_{L^\infty (\tilde f^{-1}(B_j(0)))} \rightarrow 0$ as $j \rightarrow 0$.
\end{itemize}
\end{theorem}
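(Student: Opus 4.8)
The plan is to carry out the parabolic blow-up procedure of Kuwert and Sch\"atzle \cite{Kuwert2001,Kuwert2004}, keeping track of the two lower-order terms exactly as in \cite{McCoy2016}. The only substantial analytic input I need is the lifespan and $\varepsilon$-regularity estimate of Liu \cite{Liu2012}, the analogue for \eqref{eq:ConstrainedWillmoreFlow} of the corresponding estimate for the Willmore flow in \cite{Kuwert2002}, which I will simply assume: there are universal constants $\varepsilon_1>0$, $c_0>0$ and $C(k)$ such that if a smooth solution of \eqref{eq:ConstrainedWillmoreFlow} on $\Sigma\times[t_0,t_0+\tau)$ satisfies $\sup_{x\in\mathbb R^3}\int_{f_{t_0}^{-1}(B_\rho(x))}\|A_{f_{t_0}}\|^2\,d\mu_{f_{t_0}}\le\varepsilon_1$, then $\tau\ge c_0\rho^4$, on $[t_0,t_0+c_0\rho^4]$ one has $\sup_x\int_{f_s^{-1}(B_{\rho/2}(x))}\|A_{f_s}\|^2\,d\mu_{f_s}\le C\varepsilon_1$, and on $(t_0,t_0+c_0\rho^4]$ one has the interior bounds $\|\nabla^kA_{f_s}\|_{L^\infty}^2\le C(k)(\min\{s-t_0,\rho^4\})^{-(k+1)/2}$.

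\emph{Step 1: concentration of curvature.} For $t\in[0,T)$ I would set $\rho(t):=\sup\{\rho>0:\sup_{x}\int_{f_t^{-1}(B_\rho(x))}\|A_{f_t}\|^2\,d\mu_{f_t}\le\varepsilon_1\}$ and first show $\rho(t)\to0$ as $t\uparrow T$: otherwise $\rho(t_k)\ge\delta>0$ along some $t_k\uparrow T$, and applying the lifespan estimate at $t_k$ with a radius just below $\delta$ produces a smooth solution on $[t_k,t_k+c_0(\delta/2)^4]$, which by uniqueness is $f$, so $f$ extends smoothly past $T$ once $t_k>T-c_0(\delta/2)^4$, contradicting maximality of $T$. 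Fixing any $t_j\uparrow T$, the map $\rho\mapsto\sup_x\int_{f_{t_j}^{-1}(B_\rho(x))}\|A\|^2\,d\mu_{f_{t_j}}$ is continuous and non-decreasing, vanishes as $\rho\downarrow0$, and exceeds $\varepsilon_1$ for large $\rho$ since $\int_\Sigma\|A_{f_{t_j}}\|^2\,d\mu_{f_{t_j}}\ge 2\wen(f_{t_j})\ge 8\pi>\varepsilon_1$; hence I can pick $r_j$ with $\sup_x\int_{f_{t_j}^{-1}(B_{r_j}(x))}\|A_{f_{t_j}}\|^2\,d\mu_{f_{t_j}}=\varepsilon_0$ for a fixed fraction $\varepsilon_0\in(0,\varepsilon_1)$ of the threshold, and then $r_j\to0$ by the argument just given, and finally $x_j\in\mathbb R^3$ with $\int_{f_{t_j}^{-1}(B_{r_j}(x_j))}\|A_{f_{t_j}}\|^2\,d\mu_{f_{t_j}}\ge\varepsilon_0/2$. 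Relabelling $\varepsilon_0/2$ as $\varepsilon_0$ will yield the asserted lower bound.

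\emph{Step 2: rescaling and uniform estimates.} I then set $f_j(p,t):=r_j^{-1}(f(p,t_j+r_j^4t)-x_j)$. Using $H_{f_j}=r_jH_f$, $K_{f_j}=r_j^2K_f$ and $\Delta_{f_j}=r_j^2\Delta_f$, a direct computation from \eqref{eq:ConstrainedWillmoreGradient} shows $f_j$ solves \eqref{eq:ConstrainedWillmoreFlow} with parameters $(r_j^2\lambda_1,\,r_j^3\lambda_2)\to(0,0)$. Since $\|A\|^2\,d\mu$ and the Euclidean balls rescale compatibly, the hypothesis of the lifespan estimate holds for $f_j$ at time $0$ with radius $1$; noting $[t_j,t_j+c_0r_j^4]\subseteq[0,T)$ (again from the lifespan estimate together with maximality of $T$), each $f_j$ is a smooth solution on $\tilde\Sigma\times[0,c_0]$ with $\sup_x\int_{f_j(\cdot,t)^{-1}(B_{1/2}(x))}\|A_{f_j}\|^2\,d\mu_{f_j}\le C\varepsilon_1$ for all $t\in[0,c_0]$, with $\int_{f_j(\cdot,0)^{-1}(B_1(0))}\|A_{f_j}\|^2\,d\mu_{f_j}\ge\varepsilon_0$, and with $\|\nabla^kA_{f_j}(\cdot,t)\|_{L^\infty}\le C(k,t)$ uniformly in $j$ for $t\in(0,c_0]$. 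Covering $B_R(0)$ by balls of radius $1/2$ bounds $\int_{f_j(\cdot,t)^{-1}(B_R(0))}\|A_{f_j}\|^2\,d\mu_{f_j}$, and combined with the localized version of \eqref{eq:EstDiam} applied on dyadic annuli (as in \cite{Simon1993,Kuwert2001}) this gives the scale-invariant area bound $\mu_{f_j(\cdot,t)}(f_j(\cdot,t)^{-1}(B_R(0)))\le C(R)$.

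\emph{Step 3: compactness, and the main obstacle.} With uniformly bounded $\|\nabla^kA_{f_j}\|_{L^\infty}$ and bounded area on preimages of fixed balls --- and with all time derivatives likewise bounded, by differentiating the flow equation and invoking interior parabolic regularity --- the standard compactness theorem for immersions of bounded geometry produces, along a subsequence, a proper smooth immersion $\tilde f:\tilde\Sigma\to\mathbb R^3$ together with the diffeomorphisms $\phi_j$, the representation $f_j(\phi_j,t)=\tilde f+u_j(\cdot,t)$ with $u_j$ normal along $\tilde f$, and $\|\nabla^ku_j\|_{L^\infty(\tilde f^{-1}(B_j(0)))}\to0$; this is exactly the claimed $C^\infty_{\mathrm{loc}}$ convergence, and the $\varepsilon_0$-lower bound and the local curvature and area bounds pass to the limit. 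The delicate point I expect to be the main obstacle within the argument is that the interior derivative bounds of Step 2 degenerate as $t\downarrow0$, so that bare-handed one only obtains the conclusion on $(0,c_0]$. To reach $t=0$ one blows up not at $t_j$ but around a time $t_j-\theta r_j^4$ for a small universal $\theta\in(0,c_0)$, using the localized energy inequality of \cite{Kuwert2001} to propagate the $\varepsilon_1$-concentration at scale $\asymp r_j$ over a forward-and-backward time interval of length $\asymp r_j^4$; after rescaling this interval has fixed length and the relevant time sits in the interior, restoring the uniform estimates and the convergence on all of $[0,c_0]$. The genuinely hard ingredient, Liu's lifespan estimate, I have only cited.
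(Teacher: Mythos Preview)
The paper does not prove Theorem~\ref{thm:Blowup}; it is stated without proof and attributed to McCoy and Wheeler \cite{McCoy2016}, who adapted the blowup construction of Kuwert and Sch\"atzle \cite{Kuwert2001,Kuwert2004} to the constrained flow. Your sketch is a faithful outline of precisely that argument --- curvature concentration via Liu's lifespan estimate \cite{Liu2012}, parabolic rescaling, interior estimates, and Langer-type compactness --- so your approach and the one the paper invokes coincide.

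One small imprecision worth flagging: the local area bound $\mu_{f_j}(f_j^{-1}(B_R(0)))\le C(R)$ in your Step~2 does not come from a localized form of Topping's diameter estimate \eqref{eq:EstDiam}; it comes from Simon's monotonicity inequality for the Willmore functional \cite{Simon1993}, which controls the area ratio $R^{-2}\mu_f(f^{-1}(B_R))$ by $\wen(f)$, and this is what is actually used in \cite{Kuwert2001}. Also, in Step~2 you write ``$f_j$ is a smooth solution on $\tilde\Sigma\times[0,c_0]$'' before $\tilde\Sigma$ has been constructed; this should read $\Sigma\times[0,c_0]$.
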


We will call such a family of immersion $\tilde f$ a blowup limit in the following.

The next theorem shows that possible blowup limits are stationary and parametrize Willmore surfaces. It is an extension of Theorem 4.4 in \cite{McCoy2016}. Again McCoy and Wheeler have shown the result only under the assumption that the energy of the initial surface is close to the Willmore energy of a sphere $4\pi$.

\begin{theorem} \label{thm:BlowupWillmore}
Let $f: \Sigma \times [0,T) \rightarrow \mathbb R^3$ be the maximal solution to \eqref{eq:ConstrainedWillmoreFlow} with $T < \infty$, i.e. singularities occur in finite time.
Then the blowup limit $\tilde f: \tilde \Sigma_\infty \rightarrow \mathbb R^3$ constructed in Theorem \ref{thm:Blowup} does not depend on time and parametrizes a Willmore surface.
\end{theorem}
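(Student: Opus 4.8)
The plan is to exploit the different scaling of the three summands of $\en_{\lambda_1,\lambda_2}$, exactly in the spirit of Section~\ref{sec:Exi}, so as to avoid entirely the delicate estimate $\|\grad_{\lambda_1,\lambda_2}\|^2_{L^2}\ge c_0\|\wgrad\|^2_{L^2}$ underlying the corresponding result in \cite{McCoy2016}. First I would record how the rescaled flows $f_j(p,t)=\frac1{r_j}\bigl(f(p,t_j+r_j^4t)-x_j\bigr)$ of Theorem~\ref{thm:Blowup} behave. Since at corresponding points $H_{f_j}=r_jH_f$, $K_{f_j}=r_j^2K_f$, $\Delta_{f_j}=r_j^2\Delta_f$, $\nu_{f_j}=\nu_f$ and $d\mu_{f_j}=r_j^{-2}\,d\mu_f$, a direct computation from \eqref{eq:ConstrainedWillmoreGradient} gives
\[
 \partial_tf_j(p,t)=r_j^3\,\partial_tf(p,t_j+r_j^4t)=-\bigl(\Delta_{f_j}H_{f_j}+2H_{f_j}(H_{f_j}^2-K_{f_j})-r_j^2\lambda_1H_{f_j}+r_j^3\lambda_2\bigr)\nu_{f_j},
\]
so that $f_j$ solves \eqref{eq:ConstrainedWillmoreFlow} on $[0,c_0]$ with the \emph{vanishing} parameters $\lambda_1^{(j)}:=r_j^2\lambda_1\to0$ and $\lambda_2^{(j)}:=r_j^3\lambda_2\to0$. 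This single observation is where the special scaling of area and volume enters.

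Next I would bring in the global energy dissipation. Integrating \eqref{eq:EnEq} yields
\[
 \int_0^T\|\grad_{\lambda_1,\lambda_2}(f_t)\|^2_{L^2(d\mu_{f_t})}\,dt=\en_{\lambda_1,\lambda_2}(f_0)-\lim_{t\uparrow T}\en_{\lambda_1,\lambda_2}(f_t),
\]
and the right-hand side is finite because $\en_{\lambda_1,\lambda_2}$ is non-increasing and bounded from below along the flow (by $\wen\ge4\pi$, the isoperimetric inequality and the control of the area from Section~\ref{sec:Exi}). From $\partial_tf_j=-r_j^3\grad_{\lambda_1,\lambda_2}(f)$ evaluated at the shifted time together with $d\mu_{f_j}=r_j^{-2}\,d\mu_f$, the substitution $s=t_j+r_j^4t$ gives
\[
 \int_0^{c_0}\bigl\|\grad_{\lambda_1^{(j)},\lambda_2^{(j)}}(f_j)(\cdot,t)\bigr\|^2_{L^2(d\mu_{f_j})}\,dt=\int_{t_j}^{t_j+r_j^4c_0}\bigl\|\grad_{\lambda_1,\lambda_2}(f_s)\bigr\|^2_{L^2(d\mu_{f_s})}\,ds\ \longrightarrow\ 0
\]
as $j\to\infty$, since $t_j\uparrow T$, $r_j\downarrow0$ and the total integral converges. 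After passing to a subsequence I may therefore assume $\|\grad_{\lambda_1^{(j)},\lambda_2^{(j)}}(f_j)(\cdot,t)\|_{L^2(d\mu_{f_j})}\to0$ for almost every $t\in[0,c_0]$.

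Finally I would pass to the limit using the smooth local convergence $f_j\to\tilde f$ provided by Theorem~\ref{thm:Blowup}. On every compact subset the quantities $H_{f_j}$, $K_{f_j}$, $\Delta_{f_j}H_{f_j}$, $\nu_{f_j}$ and $d\mu_{f_j}$ converge to those of $\tilde f$; since $\lambda_1^{(j)},\lambda_2^{(j)}\to0$ and $H_{f_j}$ is locally bounded, the constraint contributions $r_j^2\lambda_1H_{f_j}+r_j^3\lambda_2$ drop out in the limit, so $\grad_{\lambda_1^{(j)},\lambda_2^{(j)}}(f_j)\to\wgrad(\tilde f)$ locally uniformly. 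Comparing this with the previous step forces $\wgrad(\tilde f(\cdot,t))=0$ for almost every $t$, hence for every $t$ by continuity; thus every time slice of $\tilde f$ parametrizes a Willmore surface. Moreover $\partial_tf_j=-\grad_{\lambda_1^{(j)},\lambda_2^{(j)}}(f_j)\to-\wgrad(\tilde f)=0$ locally uniformly while $f_j\to\tilde f$, so $\partial_t\tilde f\equiv0$ and $\tilde f$ does not depend on time, which is the assertion.

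The one substantial ingredient is imported from elsewhere, namely the blow-up construction of Theorem~\ref{thm:Blowup} (ultimately going back to \cite{Kuwert2001,McCoy2016}); given that, I expect the only point requiring a little care to be the finiteness of the total dissipation, i.e.\ the lower bound on $\en_{\lambda_1,\lambda_2}$ along the flow — this is immediate when $\lambda_2\ge0$ and the enclosed volume stays non-negative, and otherwise follows from the area bounds established in Section~\ref{sec:Exi} together with the isoperimetric inequality. Everything else is bookkeeping, with the scaling step in the first paragraph doing the real work.
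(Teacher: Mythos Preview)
Your argument is correct and follows essentially the same route as the paper: you compute that the rescaled flows $f_j$ satisfy \eqref{eq:ConstrainedWillmoreFlow} with parameters $r_j^2\lambda_1,\,r_j^3\lambda_2\to0$, use the change of variables $s=t_j+r_j^4t$ together with the energy identity \eqref{eq:EnEq} to see that $\int_0^{c_0}\|\partial_tf_j\|_{L^2}^2\,dt=\en_{\lambda_1,\lambda_2}(f_{t_j})-\en_{\lambda_1,\lambda_2}(f_{t_j+c_0r_j^4})\to0$, and then pass to the limit via the smooth local convergence of Theorem~\ref{thm:Blowup}. The paper does precisely this (its formula has $r_j\lambda_2$ in place of your correct $r_j^3\lambda_2$, an inconsequential typo); your extra step of extracting an a.e.\ subsequence before invoking continuity is a harmless variation.

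One remark on the point you flag at the end: to conclude that $\en_{\lambda_1,\lambda_2}(f_{t_j})-\en_{\lambda_1,\lambda_2}(f_{t_j+c_0r_j^4})\to0$ one indeed needs $\lim_{t\uparrow T}\en_{\lambda_1,\lambda_2}(f_t)>-\infty$, and your appeal to ``the area bounds of Section~\ref{sec:Exi}'' only covers initial data satisfying the hypotheses of Theorems~\ref{thm:FiniteTime1}--\ref{thm:FiniteTime2}, not the unrestricted setting of Theorem~\ref{thm:BlowupWillmore}. The paper makes the same tacit assumption; in the applications (Corollary~\ref{cor:RoundPoints}, Theorem~\ref{thm:RoundPoints}) one has $\lim_{t\uparrow T}\en_{\lambda_1,\lambda_2}(f_t)<8\pi$ anyway, so this is not an obstacle there.
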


\begin{proof}
Using that $f$ satisfies equation \eqref{eq:ConstrainedWillmoreFlow} together with
$$
\Delta_{f_j} H_{f_j} +  = r_j^3 \Delta_f H_{f},
$$
$$
H_{f_j} = r_j H_f,
$$
$$
\nu_{f_j} = \nu_j,
$$
and
$$
\partial_t f_j = r_j^3 \partial_t f 
$$
we get from \eqref{eq:ConstrainedWillmoreFlow} that the $f_j$ satisfy
\begin{equation}
\partial_t f_j = \grad (f_j) + (r_j^2 \lambda_1 H_{f_j} + r_j \lambda_2 )\nu_{f_j}.
\end{equation}
Since $f_j$ converges to $\tilde f$ locally smoothly and $r_j \rightarrow 0$, this implies
\begin{equation}
\partial_t \tilde f = \grad (\tilde f).
\end{equation}
As
\begin{align*}
 \int_0^{c_0} &\left(\int_{\Sigma} \|\grad f_j(x,t) + (\lambda_1 r_j^2 H_{f_j}(x,t) +\lambda_2 r_j) \nu_{f_j} \|^2 d\mu_{f_j} \right) dt \\ &=  \int_{t_j}^{t_j+c_0r_j^4} \left(\int_{\Sigma} \|\grad_{\lambda_1, \lambda_2} f(x,t) \|^2 d\mu_{f_j} \right)dt \\
 &= \en_{\lambda_1, \lambda_2}(t_j) -  \en_{\lambda_1, \lambda_2}(t_j+c_0r_j^4) \\ & \rightarrow 0 
\end{align*}
and $r_j \rightarrow 0$ as $j \rightarrow \infty$, we deduce that $\grad \tilde f = 0.$
\end{proof}

Combining Theorem~\ref{thm:BlowupWillmore} with the classification of Willmore spheres due to Bryant \cite{Bryant1984} and the removability of point singularities of Kuwert and Sch\"atzle \cite{Kuwert2004} we get

\begin{corollary} \label{cor:RoundPoints}
 If $f_0: \mathbb S^2 \rightarrow \mathbb R ^3$ is an immersion of a sphere that develops a singularity in finite time under the locally constrained Willmore flow and $\lim_{t\rightarrow T} \en(f_t) < 8 \pi $, then the blowup limit from Theorem~\ref{thm:Blowup} is a round sphere.
\end{corollary}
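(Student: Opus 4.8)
The plan is to recognise the blowup limit $\tilde f$ of Theorem~\ref{thm:Blowup} --- after filling in its possible ends --- as a closed Willmore surface of genus zero whose Willmore energy is strictly below $16\pi$, and then to read off from Bryant's list of Willmore spheres that it can only be the round sphere.

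First I would collect the structure already in hand. By Theorem~\ref{thm:Blowup} the map $\tilde f\colon\tilde\Sigma_\infty\to\mathbb R^3$ is a proper immersion, it satisfies $\int_{\tilde f^{-1}(B_1(0))}\|A_{\tilde f}\|^2\,d\ar_{\tilde f}\ge\varepsilon_0>0$ and is therefore not a piece of a plane, and $\tilde\Sigma_\infty=\bigcup_j\tilde f^{-1}(B_j(0))$ with each $\tilde f^{-1}(B_j(0))$ diffeomorphic to a subdomain of $\Sigma=\mathbb S^2$; hence $\tilde\Sigma_\infty$ is connected, of genus zero, with finitely many ends. By Theorem~\ref{thm:BlowupWillmore} the limit is stationary, hence a Willmore immersion. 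To bound its energy I would use that the Willmore functional is scale invariant, so that $\en(f_j(\cdot,t))=\en(f(\cdot,t_j+r_j^4t))$; since $t_j\uparrow T$, $r_j\downarrow 0$ and $\en(f_t)\le 8\pi-\delta$ for some $\delta>0$ on a left neighbourhood of $T$ by hypothesis, this gives $\en(f_j(\cdot,t))\le 8\pi-\delta$ for all $t\in[0,c_0]$ and $j$ large, and lower semicontinuity of $\int|H|^2$ under the local smooth convergence $f_j\to\tilde f$ yields $\en(\tilde f)<8\pi$. Combining $\|A\|^2=4|H|^2-2K$ with Gauss--Bonnet on the sphere gives $\int_{\mathbb S^2}\|A_{f_j}\|^2\,d\ar_{f_j}=4\,\en(f_j)-8\pi<24\pi$, and the same local convergence then forces $\int_{\tilde\Sigma_\infty}\|A_{\tilde f}\|^2\,d\ar_{\tilde f}<\infty$. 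Thus $\tilde f$ is a complete, non-flat Willmore surface of genus zero, of finite total curvature, with $\en(\tilde f)<8\pi$.

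If $\tilde\Sigma_\infty$ is compact, $\tilde f$ is already a closed Willmore sphere with $\en(\tilde f)<8\pi$. If it is noncompact, I would compose $\tilde f$ with an inversion $I_p$ centred at a point $p$ off the surface; this preserves the conformally invariant part of the energy and sends all ends into $p$, which thereby becomes a point of some multiplicity $\theta$. A Li--Yau-type bound keeps $\theta\le 2$, and since finite total curvature forces the ends to be asymptotically planar, the point removability theorem of Kuwert and Sch\"atzle~\cite{Kuwert2004} applies at $p$ and produces a closed Willmore surface $\hat f\colon\hat\Sigma\to\mathbb R^3$. Filling finitely many punctures of a genus-zero surface keeps the genus zero, so $\hat\Sigma=\mathbb S^2$, and a Gauss--Bonnet computation gives $\en(\hat f)=\en(\tilde f)+4\pi\theta<8\pi+8\pi=16\pi$. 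In either case we are left with a closed Willmore immersion of $\mathbb S^2$ of Willmore energy below $16\pi$. By Bryant's classification~\cite{Bryant1984} every Willmore sphere in $\mathbb R^3$ has energy $4\pi k$ with $k\in\{1\}\cup\{4,5,6,\dots\}$, which together with Willmore's lower bound $\en\ge4\pi$ forces the energy to be exactly $4\pi$, i.e.\ the closed surface is a round sphere. In the compact case this is $\tilde f$ itself. In the noncompact case it says $\hat f$ is round, whence $\tilde f=I_p(\hat f\setminus\{p\})$ is the inversion of a round sphere through its centre, i.e.\ a plane --- contradicting $\int_{\tilde f^{-1}(B_1(0))}\|A_{\tilde f}\|^2>0$. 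So the noncompact case does not occur, and the blowup limit is a round sphere, as claimed.

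The step I expect to be the main obstacle is the passage through the removability theorem in the noncompact case: one must verify that the ends of the blowup are genuinely asymptotically planar and of multiplicity at most two, so that the Kuwert--Sch\"atzle theorem applies, that the compactification keeps genus zero, and that it does not add more than $8\pi$ of Willmore energy. This is exactly the place where the smallness assumption $\en(\tilde f)<8\pi$, and through it the Li--Yau inequality~\cite{Li1982}, is used in an essential way; everything else reduces to lower semicontinuity of the Willmore energy, Gauss--Bonnet, and the scaling of the second fundamental form.
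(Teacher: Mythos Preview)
Your overall strategy --- invert, remove the point singularity, invoke Bryant, and rule out the plane --- is exactly the paper's. The one substantive difference is the \emph{order} in which you take the blowup limit and the inversion, and this order governs how much bookkeeping you must do.

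The paper inverts the \emph{closed} approximants $f_j$ first. Since each $I\circ f_j$ is again a closed immersion of $\mathbb S^2$, exact M\"obius invariance gives $\en(I\circ f_j)=\en(f_j)<8\pi$, and lower semicontinuity under the local smooth convergence $I\circ f_j\to I\circ\tilde f$ on $\mathbb R^3\setminus\{0\}$ yields $\en(I\circ\tilde f)<8\pi$ directly. One is then squarely in the regime where Kuwert--Sch\"atzle removability produces a \emph{smooth, unbranched} closed Willmore sphere of energy below $8\pi$, and only the weakest form of Bryant's theorem (the single level $4\pi$, not the full gap $\{4\pi\}\cup\{16\pi,20\pi,\dots\}$) is needed.

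You instead pass to the limit first to get $\en(\tilde f)<8\pi$ and only then invert the noncompact $\tilde f$. This forces you to track the energy jump $\en(\hat f)=\en(\tilde f)+4\pi\theta$, to bound $\theta$, and to invoke the full Bryant gap below $16\pi$. Two remarks on this route. First, the monotonicity/Li--Yau argument actually gives density at infinity strictly below $2$, hence $\theta=1$, not merely $\theta\le 2$; this matters, because the removability theorem produces a smooth (unbranched) extension precisely when the density at the singular point is one, and Bryant's list is for smooth Willmore spheres. With $\theta=1$ you in fact get $\en(\hat f)<12\pi$ and again need only the first Bryant level. Second, a small slip: the final line should read ``inversion of a round sphere through a point \emph{on} it'' (not ``through its centre'') to conclude that $\tilde f$ is a plane.

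So your argument is sound, but the paper's trick of inverting \emph{before} the limit is what buys the clean bound $\en<8\pi$ on the inverted side and sidesteps the end-counting, the $4\pi\theta$ correction, and any worry about branch points altogether.
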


\begin{proof}
 Let us first assume that $\hat \Sigma$ is compact. Since then the local convergence of the rescaled solution is in fact global, $\tilde \Sigma$ is a topological sphere. So $\tilde f $ is a Willmore sphere with energy below $8 \pi$ and thus is parametrizing a round sphere by the classification result of Bryant \cite{Bryant1984}.
 
 We now lead the case that $\tilde \Sigma$ is not compact to a contradiction as in \cite{Kuwert2004}. We can assume without loss of generality that $0 \notin \hat f (\hat \Sigma)$ since $\tilde f$ is proper. We consider the images of the $f_j$ under the inversion on the standard sphere $I:\mathbb R^3 \setminus \{0\} \rightarrow R^3 \setminus \{0\}, x \mapsto \frac x {|x|^2}$, which is well-defined for large enough $j \in \mathbb N$. The embeddings $\tilde f_j = I \circ f_j$ converge locally smoothly to the embedding $I \circ \tilde f $ in $\mathbb R^3 \setminus \{0\}$ and due to the M\"obius invariance of the Willmore energy
 $$
  \en( I \circ \tilde f ) \leq \liminf_{j \rightarrow \infty}  \en(\tilde f^j ) 
  =  \liminf_{j \rightarrow \infty}  \en( f^j ) < 8 \pi. 
 $$
 The M\"obius invariance of the Willmore energy also implies that $I\circ \tilde f$ is a Willmore surface away from $0$. 
 Due to the point removability result of Kuwert and Sch\"atzle \cite{Kuwert2004}, $\tilde f^\infty$ can be extended to a Willmore sphere of Willmore energy less than $8\pi$. Hence, due to a result of Bryant \cite{Bryant1984}, it must parametrize a round sphere. But this would imply that $\tilde f$ was a plane - which would contradict
 $$
  \int_{\tilde \Sigma}\|A_{\tilde f }\|^2 d\mu_{\tilde f } >0.
 $$
 Hence, $\tilde \Sigma$ must be compact which concludes the proof.
\end{proof}

Corollary~\ref{cor:RoundPoints} implies Theorem~\ref{thm:RoundPoints} and the 
 following extensions of the main result in \cite{McCoy2016}.

\begin{corollary} \label{cor:RoundSphere}
 Let $\lambda_1>0, \lambda_2 \geq 0$ and $f_0: \Sigma \rightarrow \mathbb R^3$ be a closed smoothly embedded surface without boundary satisfying  $\en_{\lambda_1, \lambda_2}(f_0) < 8 \pi.$ Then the constrained Willmore flow with initial data $f_0$ converges to a round point in finite time.
\end{corollary}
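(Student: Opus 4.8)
The plan is to reduce the statement to the finite-time blowup results of Section~\ref{sec:Exi} and to the identification of the blowup limit in Corollary~\ref{cor:RoundPoints}. The first step is to check that the flow starting at $f_0$ leaves the class of smooth immersions in finite time. Orienting $f_0$ by its outer unit normal, the bounded component of $\mathbb R^3\setminus f_0(\Sigma)$ has positive Lebesgue measure, so $\vol(f_0)>0$; together with $\lambda_1>0$, $\lambda_2\geq 0$ and $\en_{\lambda_1,\lambda_2}(f_0)<8\pi$ this is exactly the input Theorem~\ref{thm:FiniteTime1} requires (for $\lambda_2=0$ only $\lambda_1>0$ is used, while for $\lambda_2>0$ one additionally uses that $f_0$ is an embedding with positive inclosed volume and energy at most $8\pi$). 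Hence the maximal existence time $T$ is finite, with the quantitative bound stated there.

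Next I would verify that the Willmore energy stays strictly below $8\pi$ along the flow. By the energy identity \eqref{eq:EnEq} the functional $\en_{\lambda_1,\lambda_2}$ is non-increasing, so $\en_{\lambda_1,\lambda_2}(f_t)\leq\en_{\lambda_1,\lambda_2}(f_0)<8\pi$ for every $t\in[0,T)$. If $\lambda_2>0$, the embeddedness-preserving argument at the end of the proof of Theorem~\ref{thm:FiniteTime1} shows that $f_t$ stays embedded, hence $\vol(f_t)\geq 0$, and therefore $\en(f_t)=\en_{\lambda_1,\lambda_2}(f_t)-\lambda_1\ar(f_t)-\lambda_2\vol(f_t)\leq\en_{\lambda_1,\lambda_2}(f_t)<8\pi$; if $\lambda_2=0$ this inequality is immediate. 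In particular $\lim_{t\uparrow T}\en(f_t)<8\pi$.

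With these two inputs, Corollary~\ref{cor:RoundPoints} applies — here one uses that $\Sigma$ is a topological sphere — and shows that the blowup limit produced by Theorem~\ref{thm:Blowup} is a round sphere; equivalently, by Theorem~\ref{thm:RoundPoints} the flow sub-converges to a round point at the finite time $T$. The remaining, and I expect most delicate, step is to upgrade this sub-convergence to genuine convergence to a round point: once the appropriately rescaled surfaces are $C^\infty$-close to a round sphere along one sequence of times, one argues as in \cite{McCoy2016} (compare the Willmore-flow case in \cite{Kuwert2001}) that near the round sphere the volume-normalized flow is a small perturbation of the one under which round spheres shrink self-similarly to round points, and a stability and uniqueness argument then forces the entire flow, not merely a subsequence, to collapse to a round point. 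I expect the bookkeeping in this last perturbation step to be the main obstacle; the formation of the singularity itself is immediate from Theorem~\ref{thm:FiniteTime1}.
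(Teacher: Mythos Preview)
Your approach coincides with the paper's, which simply asserts that Corollary~\ref{cor:RoundPoints} (together with the results of Section~\ref{sec:Exi}) yields Corollary~\ref{cor:RoundSphere}; you have spelled out the intended route in more detail. Two remarks are in order. First, to invoke Corollary~\ref{cor:RoundPoints} you assert that $\Sigma$ is a topological sphere, but the stated hypotheses of Corollary~\ref{cor:RoundSphere} do not force this: a sufficiently small rescaling of an embedded torus with Willmore energy just above $2\pi^2<8\pi$ can satisfy $\en_{\lambda_1,\lambda_2}(f_0)<8\pi$ for suitable $\lambda_1,\lambda_2$. The paper makes the same tacit assumption, so this is a shared omission in the statement rather than a defect peculiar to your argument. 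Second, the paper only ever establishes \emph{sub}-convergence to a round point (Theorem~\ref{thm:RoundPoints}, and likewise Theorem~\ref{thm:McCoy2016}); the word ``converges'' in Corollary~\ref{cor:RoundSphere} should be read accordingly, so your proposed final perturbation step upgrading sub-convergence to full convergence is not needed to match the paper and is not carried out there.
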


\begin{figure}[t] \label{fig:EnergyBound} 
\includegraphics[height=10cm]{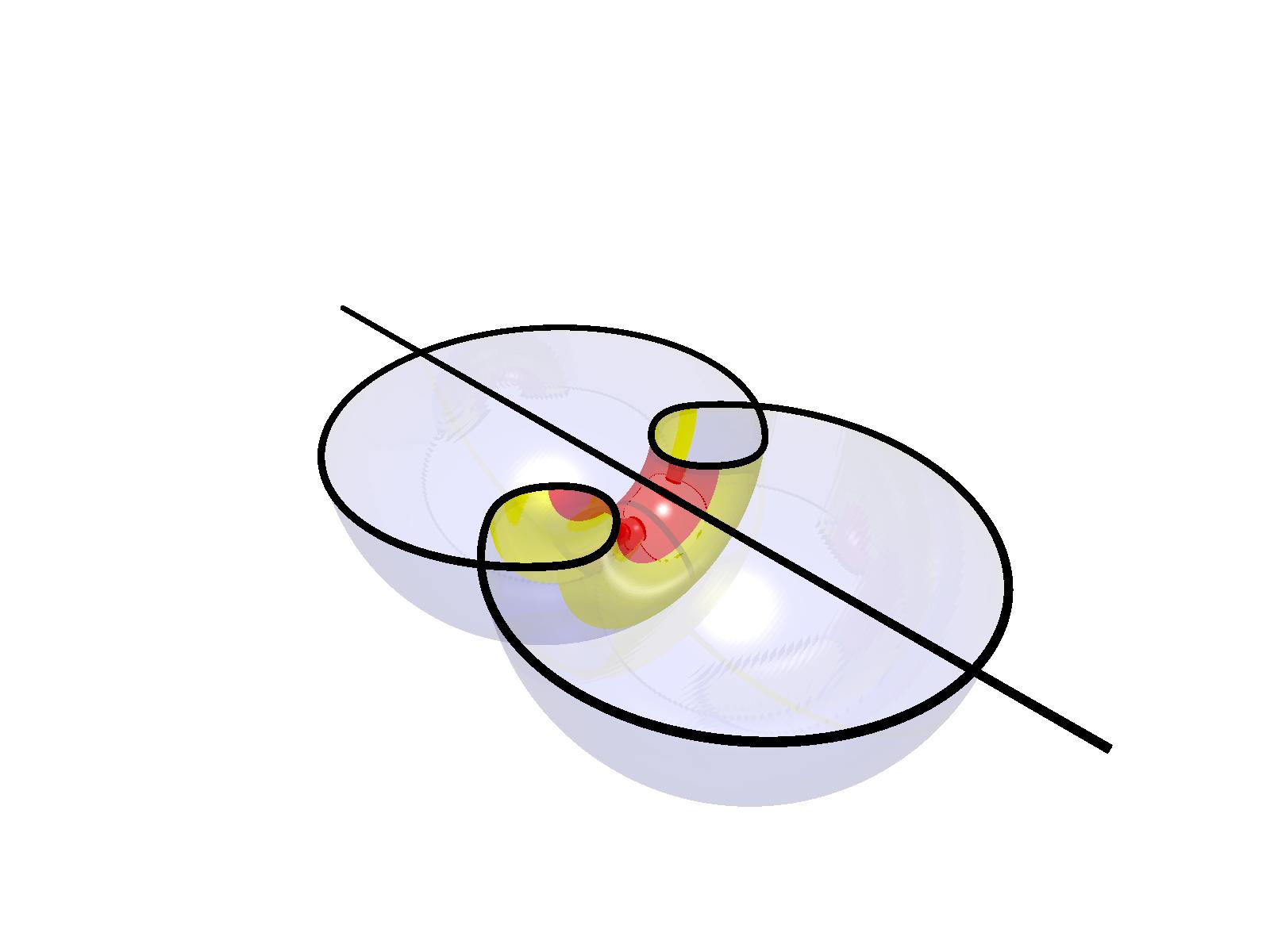}
\caption{ The surface shown above is built out of two
round spheres painted in blue and a piece of a catenoid, painted in red. The yellow part is used
to connect these pieces. One can adapt the parameters such that the Willmore energy of the resulting surface is arbitrary close to $8\pi$.}
\end{figure}

\begin{remark}
The constant $8 \pi$ in Corollary~\ref{cor:RoundSphere} above is sharp which can be shown following the lines of argument in \cite{Blatt2009b}. There we showed that surfaces of revolution exist such that the Gau\ss{} map of the profile curve has index equal to three of Willmore energy just slightly larger than $8\pi$. Figure \ref{fig:EnergyBound} illustrates the construction of such a surface of revolution. As surfaces of revolution remain surfaces of revolution under the flow, the blowup limit can impossibly be a sphere, as otherwise the index of the Gau\ss{} map of the initial surface must have been $\pm 1$. Shrinking the surface if necessary, we see that for every $\varepsilon >0$ we can find an immersed sphere $f:\mathbb S ^2 \rightarrow \mathbb R^3$ with $\en_{\lambda_1, \lambda_2}(f) < 8\pi +  \varepsilon$ that forms a singularity in finite time in such a way that the blowup limit cannot be a round sphere - instead it consists of a finite number of catenoids and planes.
\end{remark}




\end{document}